\newtheorem{thm}{Theorem}[section]
\newtheorem{cor}[thm]{Corollary}
\newtheorem{lem}[thm]{Lemma}
\newtheorem{prop}[thm]{Proposition}
\newtheorem{corol}{Corollary}
\newtheorem*{theorem-non}{Main Theorem}
\theoremstyle{definition}
\newtheorem{defin}[thm]{Definition}
\newtheorem{rem}[thm]{Remark}
\numberwithin{equation}{section}
\begin{document}


\baselineskip=17pt



\title[Simplicity over Singular Hyperbolicity]{Simplicity over Singular Hyperbolicity}

\author[M. Fanaee]{M. Fanaee}
\address{Instituto de Matematica e Estatistica\\Universidade Federal Fluminense\\24.020-140 Niteroi, RJ - Brazil}
\email{mf@id.uff.br}

\date{}

\begin{abstract}
We prove that, when $r+\rho>0$, for a majory set (containing an open and dense subset) of $C^{r,\rho}$ fiber bunched linear cocycles over singular hyperbolic attractors, the Lyapunov exponents have multiplicity 1.
\end{abstract}

\subjclass[2010]{Primary 37H15; Secondary 37D25}

\keywords{Linear cocycles, Lyapunov exponents, invariant measures, singular hyperbolic attractors}

. 

\maketitle

\section{Introduction}

As a weaker form of uniform hyperbolicity, the class of singular hyperbolic systems is a vast familly of flows that contains the Axiom A systems [S67], 
the Lorenz flows ]L63] and the singular horseshoes [LP86], among other systems. 

More precisely, suppose that $M$ is a $C^1$ closed manifold. Fix some smooth Riemannian structure on M 
and an induced normalized volume form $m$ (called Lebesgue measure).
We denote by $\mathcal C^1(M)$ the space of all $C^1$ vector fields on $M$. Given a vector field $X\in\mathcal C^1(M)$  one obtains, 
by integration, a one parameter family of $C^1$ diffeomorphisms
$\{X^t:M\rightarrow M,~t\in\mathbb R\}$ which is called a flow on $M$ which satisfies
(i) $X^0\equiv id$, and (ii) $X^t \circ X^s=X^{t+s}$, for any $t,s\in\mathbb R$.

An invariant set $\Lambda\subset M$ is called an attractor of a vector field $X$ if there is a neighborhood $U$ of $\Lambda$ such that
$$\Lambda=\bigcap_{t>0}X^t(U)$$
(a repeller is an attractor for the reversed vector ﬁeld $-X$). An attractor $\Lambda$ is transitive if there exists a dense orbit in $\Lambda$, and it is $C^1$-robustly transitive if there exists a $C^1$-neghborhood $\mathcal N$ of the vector field $X$ such that the set
$\bigcap_{t>}Y^t(U)$
is transitive for any $Y\in\mathcal N$.

It is proved in [MPP04] that, for $C^1$-robustly transitive sets with singularities on closed 3-manifolds,  
there are either proper attractors or proper repellers and
the eigenvalues at the singularities satisfy the following inequalities:
\begin{eqnarray}
\alpha_{ss}<\alpha_s<0<-\alpha_s<\alpha_u.
\end{eqnarray}

The presence of singularities prevents these attractors from being hyperbolic.

\begin{defin}
An attractor $\Lambda\subset M$ with a finite number of hyperbolic singularities is a singular hyperbolic attractor 
if the bundle over $\Lambda$ can be written as an ivariant continuous splitting 
\begin{eqnarray}
T_\Lambda M=E^s\oplus E^{cu}
\end{eqnarray}
such that, with respect to an adapted metric, there exists some $0<\theta<1$ for which\\
1. the splitting is dominated: $||DX^t|E^s||.||DX^t|E^{cu}||\leq\theta^t$, for any $t>0$,\\
2. $E^s$ is contracting: $||DX^t|E^s||<\theta^t$, for any $t>0$,\\
3. $E^{cu}$ is volume expanding: $|det(DX^t|E^{cu})|\geq exp(-\theta t)$, for any $t\geq0$.
\end{defin}

From a measure theoretic view point, see [APPV09] for instance, there exists a unique invariant physical probability measure $\mu$ suprted on $\Lambda$ which is hyperbolic, meaning that at almost every point $x\in M$ there is an invariant splitting of the tangent bundle of the form 
$$T_xM=E^s(x)\oplus E^X(x)\oplus E^u(x)$$
where, with respect to dynamical cocycle $DX^t$, the stable sub-bundle $E^s(x)$ corresponds to negative Lyapunov exponent 
$$\lim_{|t|\rightarrow+\infty}\frac{1}{t}\log||DX^t|_{E^s(x)}||<0.$$
$E^X(x)$ is the one-dimensional direction of the flow corresponding to zero exponent
and $E^u(x)$ is the one-dimensional sub-bundle of vectors with positive Lyapunov exponent:
$\lim_{|t|\rightarrow+\infty}\frac{1}{t}\log||DX^t|_{E^u(x)}||>0$.

In a more general case, let $\pi:\mathcal V\rightarrow M$ be a measurable $d$-dimensional vector bundle over $M$. A linear cocycle over $X^t$ is a flow
$$F_A^t:\mathcal V\rightarrow\mathcal V$$
which acts by linear isomorphisms $A^t(x):\mathcal V_x\rightarrow\mathcal V_{f(x)}$ on the fibers. 
Oseledets Theorem [O69] states that, under a certain integrability condition and with respect to any invariant probability measure,
there exist a Lyapunov splitting of the vector bundle as
$$\mathcal V_x=E^1(x)\oplus...\oplus E^k(x),~1\leq k=k(x)\leq d$$
and real numbers $\lambda_1(x)>...>\lambda_k(x)$ called Lyapunov exponents defined by
$$\lambda_i(x)=\lim_{|t|\rightarrow+\infty}\frac{1}{t}\log||A^t(x).v||,~v\in E^i(x)\backslash\{0\},~1\leq i\leq k,$$
at almost every point. The Lyapunov splitting and the Lyapunov exponents are invariant by the flow $X^t$ and vary measurably with the base point $x$.
Then, for ergodic flows, the Lyapunov splitting and Lyapunov exponents do not depend on the base points and so are global properties of the system.\\

One problem is then to chracterize when all Lyapunov exponents have multiplicity 1.
This kind of problem arised by Furstunberg [F63] for cocycles over Bernoulli shifts when the cocycle depends only on the first coordinate.

Ledrappier [L86] proposed another approach to this problem and,
Viana, Gomez-mont, Bonatti, Avila and Santamaria 
(see for instance [BGV03], [BV04], [AV07], [ASV13]) improved it for H\"older continuous cocycles over chaotic maps (hyperbolic an partially hyperbolic maps).
In recent works of [F13] and [BV] there are some ideas to extend the last results for cocycles over flows. 
Here, we extend this type of criterion, in partucalr, for cocycles over singular hyperbolic flows.\\

\textit{Acknowledgement.} This work is supported by a CNPq-Brazil Post doctorate grant and is done at University of Porto-Portugal.

\section{The main setting and resualts}

The $C^{r,\rho}$ topology is defined by
$$||A^t||_{r,\rho}=\max_{0\leq i\leq r}\sup_x||D^iA^t(x)||+\sup_{x\neq y}\dfrac{||D^rA^t(x)-D^rA^t(y)||}{\mathrm{d}(x,y)^{\rho}}$$
(for $\rho=0$ omit the last term). 
We denote by $\mathcal G^{r,\rho}(M,d,\mathbb C)$ the Banach space of all linear cocycles $F^t_A$ for which $||A^t||_{r,\rho}<+\infty$,
for all $t\in\mathbb R$.

In this course, we assume $r+\rho>0$ which implies $\eta-$H\"older continuity:
$$\parallel A^t(x)-A^t(y)\parallel\leq\parallel A^t\parallel_{0,\eta}\mathrm{d}(x,y)^\eta,$$
where 
\[\eta=\left\{\begin{array}{cl}
\rho&r=0\\1&r\geq 1.
\end{array}\right.\]

\subsection{Fiber bunched cocycles}

Let $F_A^t$ be an $\eta$-H\"older connuous linetr cocycle.

\begin{defin}
We say that $F^t_A$ is fiber bunched if there exists some $0<\gamma<0$ such that 
\begin{eqnarray}
||A^t(x)||||A^t(x)^{-1}||\theta^{t\eta}<\gamma^t
\end{eqnarray}
for every $x\in M$ and all $t>0$.
\end{defin}

\begin{rem}
Fiber bunching is a $C^0$-open condition: if $F_A^t$ is fiber bunched then any sufficiently $C^0$-close cocycl0e is also fiber bunched, by definition. 
Hence, the set of all fibr bunched linear cocycles is an open set in the space of all linear cocycles. 
\end{rem}

\subsection{Simple Lyapunov exponents}

The main result of this work is the following.

\begin{theorem-non}
For any $r,\rho$ with $r+\rho>0$, the set of all $C^{r,\rho}$ fiber bunched linear cocycles over a singular hyperbolic flow 
for which all Lyapunov exponents have multiplicity 1, contains an open and dense subset of all fiber bunched linear cocycles in
$\mathcal G^{r,\rho}(M,d,\mathbb C)$, with respect to the invariant hyperbolic probability measure $\mu$. 
\end{theorem-non}

Singular hyperbolic attractors are motivated by the classical construction of so called Lorenz attractors: 
an invariant non-hyperbolic set of a robust flow given by the solutions of the Lorenz equations
\begin{eqnarray}
\begin{array}{l} \dot x=10(y-x), \\
\dot y=28x-y-xz, \\
\dot z=xy-\frac{8}{3}x.
\end{array}
\end{eqnarray}

To study the dynamical behavior of Lorenz attractors, in the late-seventies, was introduced a geometric model of the Lorenz attractor  
in [ABS77], [W79] and [GW79]. The next corollary is then straghtforward as a particular case of the Main Theorem.

\begin{corol}
For any $r,\rho$ with $r+\rho>0$ and with respect to $\mu$, the set of all $C^{r,\rho}$ fiber bunched linear cocycles over a geometric Lorenz attractor 
for which all Lyapunov exponents have multiplicity 1 
contains an open and dense subset of all fiber bunched linear cocycles in $\mathcal G^{r,\rho}(M,d,\mathbb C)$.
\end{corol}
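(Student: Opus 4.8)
The plan is to reduce the statement about cocycles over the singular hyperbolic \emph{flow} to a statement about cocycles over a \emph{hyperbolic map} — a Poincar\'e return map to a cross-section transverse to the flow — and then invoke the Bonatti–Viana / Avila–Santamar\'ia–Viana machinery for fiber bunched H\"older cocycles over hyperbolic (or, more precisely, nonuniformly hyperbolic with local product structure) systems. Concretely, I would first fix a finite collection of cross-sections $\Sigma_i$ transverse to $X^t$ whose union meets every regular orbit, and pass to the associated return map $R$; because the singular hyperbolic attractor $\Lambda$ carries a unique physical hyperbolic measure $\mu$, the induced measure $\hat\mu$ on $\bigcup\Sigma_i$ is a hyperbolic measure for $R$ with a local product structure (this is standard from [APPV09] and the Lorenz-like geometry, using (1.1)). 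The flow cocycle $F^t_A$ induces, by integrating $A^t$ along return-time intervals, a linear cocycle $\hat A$ over $R$; the fiber bunching hypothesis (1.3) on $F^t_A$, together with the bounded return times and the domination constant $\theta$, translates into fiber bunching for $\hat A$ over $R$ in the sense required by those references. The Lyapunov exponents of $F^t_A$ with respect to $\mu$ and of $\hat A$ with respect to $\hat\mu$ agree up to the positive multiplicative constant given by the mean return time, so simplicity of one spectrum is equivalent to simplicity of the other — except for the single zero exponent in the flow direction, which is automatically simple and must be tracked separately.

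With this reduction in place, the second block is to apply the criterion for simplicity of Lyapunov exponents of fiber bunched cocycles over hyperbolic systems: for an open and dense subset of such cocycles, the stable and unstable holonomies are well defined and depend continuously on the cocycle, and non-simplicity of the spectrum forces the existence of invariant or periodic subbundles compatible with \emph{both} holonomies (an ``invariance principle'' in the style of Ledrappier [L86] and Avila–Viana [AV07]). One then shows that such rigid configurations can be destroyed by an arbitrarily small perturbation of the cocycle supported near a periodic orbit: perturb $\hat A$ near a fixed periodic point $p$ of $R$ so that the return matrix at $p$ has all eigenvalues of distinct moduli (a pinching condition), and near a second orbit or a homoclinic point so that the holonomies no longer preserve the eigenspaces of that matrix (a twisting condition). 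Pinching plus twisting rules out any measurable invariant subbundle, hence forces simplicity; both are open conditions and are achieved on a dense set, giving the open-and-dense conclusion. Openness of the whole construction uses that fiber bunching is $C^0$-open (the Remark after the definition) and that holonomies vary continuously in the $C^{r,\rho}$ topology when $r+\rho>0$.

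A few technical points need care. First, the cross-sections cannot be chosen to avoid the singularities, so the return map $R$ has discontinuities along the stable manifolds of the singularities and the return time is unbounded there; one must check that the set of orbits accumulating on singularities has $\hat\mu$-measure zero (true since $\mu$ is hyperbolic and the singularities are hyperbolic saddles) and that the induced cocycle $\hat A$ is still log-integrable and H\"older on each continuity domain, so that Oseledets and the holonomy constructions apply on a full-measure $R$-invariant set. Second, ``perturbing the flow cocycle'' must be done at the level of $F^t_A$ itself, not just of $\hat A$: a perturbation of $\hat A$ localized near a periodic orbit of $R$ lifts to a perturbation of the time-$t$ cocycle localized in a flow box around the corresponding periodic orbit of $X^t$, and one checks the $C^{r,\rho}$ size is controlled — here the bounded geometry of flow boxes away from singularities is what makes this work. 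Third, density of periodic orbits and of homoclinic points in the support of $\hat\mu$ (needed for the pinching/twisting perturbations to reach a dense set of cocycles) follows from transitivity of $\Lambda$ together with the shadowing available on the hyperbolic part. The main obstacle, and where the real work lies, is precisely this interplay between the singularities and the induced system: making the reduction to $R$ rigorous — controlling return times, the measure-zero set of bad orbits, and the H\"older/bunching estimates for $\hat A$ near the discontinuities — is the crux, since once a clean fiber bunched H\"older cocycle over a hyperbolic map with a product-structure measure is obtained, the simplicity criterion is by now essentially off the shelf.
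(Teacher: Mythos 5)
Your argument is sound in outline, but it does far more work than the paper does for this particular statement: the paper derives Corollary 1 in one line, by observing that the geometric Lorenz attractor satisfies Definition 1.1 (it is a singular hyperbolic attractor carrying a unique hyperbolic physical measure $\mu$), so the Main Theorem applies verbatim. What you have written is, in effect, a reconstruction of the proof of the Main Theorem itself, specialized to the Lorenz geometry --- and as such it matches the paper's Sections 3--5 quite closely: the paper also passes to a global Poincar\'e map $f:\Sigma\rightarrow\Sigma$ on cross-sections near the singularities, builds the induced hyperbolic measure $\mu_f$ via the quotient one-dimensional expanding map along the stable foliation, shows the induced cocycle $A_f(x)=A^{\tau(x)}(x)$ inherits fiber bunching (its Lemma 4.1), shows the exponents of $A_f$ and $A^t$ are proportional via the mean return time (Lemma 4.2), and then invokes the Avila--Viana simplicity criterion for fiber bunched cocycles over Markov maps (its Theorem 3.13, resting on the Markov structure of [AP07]) rather than carrying out the pinching/twisting perturbations by hand as you propose. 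The one structural difference is that the paper routes the transfer of open-and-denseness back to the flow through an intermediate suspension semi-flow and two submersion statements ($A^t\mapsto A_f$ and $A^t\mapsto A^t_\Phi$, Propositions 4.3 and 5.3), where you instead lift localized perturbations of the induced cocycle to flow boxes; these accomplish the same thing. Your flagged technical points (unbounded return times near the stable manifolds of the singularities, the measure of the bad set, the H\"older and bunching estimates near the discontinuities of the return map) are exactly the places where the paper's treatment is thinnest, so your caution there is warranted --- but for the corollary as stated, the economical proof is simply: a geometric Lorenz attractor is a singular hyperbolic attractor, hence the Main Theorem gives the conclusion directly.
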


It was proved in [T98] that the solutions of (1.4)  
perform as same as the geometric model of Lorenz attractor. Hence, we have immediately the following.

\begin{corol}
For any $r,\rho$ with $r+\rho>0$ and with respect to $\mu$, the set of $C^{r,\rho}$ linear cocycles over a Lorenz flow
for which all Lyapunov exponents have multiplicity 1 contains an open and dense subset of all fiber bunched linear cocycles in 
$\mathcal G^{r,\rho}(M,d,\mathbb C)$.
\end{corol}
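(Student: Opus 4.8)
For this corollary the plan is to obtain it as a direct instance of the Main Theorem, the only extra ingredient being the structural results on the classical Lorenz system. Since the Main Theorem is stated for an arbitrary singular hyperbolic flow, it suffices to certify that the maximal invariant set $\Lambda=\bigcap_{t>0}X^t(U)$ of the flow $X^t$ generated by the vector field in $(1.4)$, for a suitable isolating neighbourhood $U$, is a singular hyperbolic attractor in the sense of the Definition above, and that the hyperbolic physical measure $\mu$ with respect to which ``multiplicity $1$'' is understood is the one furnished by [APPV09]. Once these two facts are in place, the statement is just the Main Theorem applied to this particular flow.

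In order, I would first invoke Tucker's rigorous analysis [T98], which shows that the true Lorenz flow behaves exactly as the geometric Lorenz model: the first-return map to a cross-section contracts a robust one-dimensional stable foliation, its quotient is a uniformly expanding interval map, and near the equilibrium the eigenvalue inequalities $(1.1)$ hold. Second, I would translate this return-map picture into the intrinsic language of the Definition, producing a dominated splitting $T_\Lambda M=E^s\oplus E^{cu}$ with $E^s$ uniformly contracting and $E^{cu}$ volume-expanding for an adapted metric; this is the standard argument by which singular hyperbolicity of the geometric model itself is checked (cf. [MPP04], [APPV09]). Third, [APPV09] then provides the unique hyperbolic physical measure $\mu$ on $\Lambda$. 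Finally, I would apply the Main Theorem: for any $r,\rho$ with $r+\rho>0$, the set of $C^{r,\rho}$ fiber bunched cocycles in $\mathcal G^{r,\rho}(M,d,\mathbb C)$ over $X^t$ whose Lyapunov exponents all have multiplicity $1$ relative to $\mu$ contains an open and dense subset of the (open) set of fiber bunched cocycles, which is exactly the assertion to be proved. Equivalently, one may quote the first Corollary for the geometric model and then transfer it along Tucker's equivalence, which makes the logical content of the two corollaries identical.

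The one step that genuinely needs care, and the main obstacle, is the passage from ``the return map has the geometric-Lorenz structure'' to ``the flow is singular hyperbolic in the precise sense of items $1$--$3$'', since [T98] is phrased in terms of a Poincar\'e map and a geometric template whereas our hypothesis is intrinsic to $DX^t$. Making the phrase ``perform as same as'' rigorous requires suspending the hyperbolic splitting of the return map, controlling the dynamics in a neighbourhood of the singularity by means of $(1.1)$, and exhibiting the adapted metric in which the domination, contraction and volume-expansion estimates hold uniformly in $t$; one must also check that the fiber bunching constant and the H\"older exponent $\eta$ used in the Main Theorem are unaffected by this reduction, which they are since $\theta$ and $\eta$ are fixed once the flow is. All of this is by now routine in the singular hyperbolic literature, but it is the place where the brief citation in the text has to be expanded.
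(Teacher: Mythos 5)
Your proposal is correct and takes essentially the same route as the paper: the text derives this corollary in one line by citing Tucker's result [T98] that the Lorenz equations realize the geometric Lorenz model, and then appeals to the preceding corollary (equivalently, the Main Theorem applied to the resulting singular hyperbolic attractor with the physical measure $\mu$ of [APPV09]). Your additional discussion of how to pass from the return-map description to the intrinsic splitting $E^s\oplus E^{cu}$ merely fills in detail the paper leaves implicit, so no further comparison is needed.
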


We stress that the set of exceptional cocycles in Main Theorem and corollaries 1 and 2 is very meager and indeed corresponds to infinite codimension: it is contained in a finite union of closed submanifolds with arbitrary high codimension.

\begin{rem}
The regularity hypothesis $r+\rho>0$ in our statements is
necessary: it is proved by Bochi and Viana [B02,BV05] that for generic $C^0$ cocycles over
general transformations Lyapunov exponents ofen vanish. Also, for generic
$L^p$ cocycles, $0<p<+\infty$, the Lyapunov exponents always vanish (see [AB03] and [AC97]).
\end{rem}

\section{Cocycles over Pincar\'e maps}

Assume that $\pi:\mathcal V\rightarrow\Sigma$ be a measurable vector bundle over a measurable space $\Sigma$. 
A linear cocycle over a measurable transformation $f:\Sigma\rightarrow\Sigma$ is a transformation $F:\mathcal V\rightarrow\mathcal V$ 
satisfying $f\circ \pi=\pi\circ F$. Locally, a linear cocycle over $f$ is a transformation $$F_A:\Sigma\times\mathbb C^d\rightarrow \Sigma\times\mathbb C^d$$ 
which acts by linear isomorphisms $A(x)$ on fibers and has the form
$F_A(x,v)=(f(x),A(x)v).$
By definition $F_A^n(x,v)=(f^n(x),A^n(x)v)$, where
$$A^n(x)=A(f^{n-1}(x))~...~A(f(x))A(x),$$
for any $n\geq1$, and we define $A^0(x)=\mathrm{id}$.  Note that, conversely, any map $A:N\rightarrow GL(d,\mathbb C)$ defines a unique corresponding linear cocycle $F_A$ over $f$.

Let $\mu_f$ be a probability measure invariant by $f$, and assume that the map $x\mapsto\max\{0,\log\parallel A(x)\parallel\}$ is $\mu$-integrable.
Oseledets Theorem [O68] states that there exist an invariant filtration of $\mathbb C^d$ as
\begin{eqnarray}
\mathbb C^d=\mathcal V_0(x)>...>\mathcal V_k(x)=\{0\},~1\leq k=k(x)\leq d,
\end{eqnarray}
and corresponding invariant numbers (Lyapunov exponents) $\lambda_1(x)>...>\lambda_k(x)$, defined as
$$\lambda_i(x)=\lim_{n\rightarrow+\infty}\frac{1}{n}\log\parallel A^n(x)v_i\parallel,~v_i\in\mathcal V_{i-1}(x)\backslash\mathcal V_i(x),~1\leq i\leq k,$$
at almost every point. 
Lyapunov exponents and Lyapunov filtration are uniquely defined at almost every point and vary measurably with the base point $x$.
By invariance, we conclude that the Lyapunov exponents are constant if the invariant probability measure $\mu_f$ is ergodic.

\subsection{Poincar\'e maps}
Hereafter, we take $\Lambda$ to be a singular hyperbolic attractor of a vector field $X\in\mathcal C^1(M)$ where $M$ is a $C^1$ closed 3-manifold.
We assume that the splitting $E^s\oplus E^{cu}$ is extended to a neighborhood $U$ of $\Lambda$. We begin with existence of local stable and local unstable manifolds.

Given any $\epsilon>0$, take $I_\epsilon=(-\epsilon,\epsilon)$.
Let $\mathcal E^1(I_1,M)$ be the space of all $C^1$-embedding maps $h:I_1\rightarrow M$ 
and $\mathcal E^1(I_1\times I_1,M)$ be the space of all $C^1$-embedding maps $h:I_1\times I_1\rightarrow M$.

\begin{prop}[Existence of local stable and local center-unstable manifolds {[APPV09]}]
There exist continuous maps $h^s:U\rightarrow\mathcal E^1(I_1,M)$ and $h^{cu}:U\rightarrow\mathcal E^1(I_1\times I_1,M)$
such that, for any $\epsilon>0$ and $x\in U$, the local stable manifold $W^s_\epsilon(x)=h^s(x)(I_\epsilon)$ 
and local center-unstabel manifold $W^{cu}(x)=h^{cu}(x)(I_\epsilon\times\epsilon)$ exist.
Moreover, we have $TW^s_\epsilon(x)=E^s_x$ and $TW^{cu}_\epsilon(x)=E^{cu}_x$.
\end{prop}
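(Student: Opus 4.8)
The statement to be proved is the Proposition on existence of local stable and local center-unstable manifolds, which is attributed to [APPV09]. Since this is cited as a known result, the plan is to recover it from the standard invariant manifold theory adapted to the singular hyperbolic setting, using that the dominated splitting $E^s\oplus E^{cu}$ has been extended to a neighborhood $U$ of $\Lambda$ and that (with respect to the adapted metric) $E^s$ is uniformly contracted while $E^{cu}$ is volume-expanding.

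\medskip

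The plan is as follows. First, I would fix the adapted metric furnished by the definition of singular hyperbolicity, so that $\|DX^t|_{E^s}\|<\theta^t$ for $t>0$ and the domination $\|DX^t|_{E^s}\|\cdot\|DX^{-t}|_{E^{cu}}\|\le\theta^t$ holds uniformly on $U$ (shrinking $U$ if necessary so the continuous extension of the splitting still enjoys these estimates with a slightly worse constant $\theta'<1$). Then, for the stable direction, I would apply the classical graph transform / Hadamard–Perron argument in the uniformly contracting bundle $E^s$: for each $x\in U$ one builds $W^s_\epsilon(x)$ as the fixed point of a contraction on the space of Lipschitz sections of a cone field around $E^s$ over $I_\epsilon$, obtaining a $C^1$ embedded arc tangent to $E^s_x$ at $x$. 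Uniform hyperbolicity on $U$ gives that the contraction constants are uniform, hence the map $x\mapsto h^s(x)\in\mathcal E^1(I_1,M)$ is continuous (indeed the fixed point depends continuously on parameters). For the center-unstable direction, $E^{cu}$ is not uniformly expanded pointwise (only volume-expanded), but it \emph{is} the "slow" bundle of the dominated splitting, so I would instead invoke the theory of locally invariant center manifolds (the plaque family / pseudo-hyperbolicity construction of Hirsch–Pugh–Shub): domination $E^s\oplus E^{cu}$ plus uniform contraction on $E^s$ yields a continuous family of locally invariant $C^1$ plaques $W^{cu}(x)=h^{cu}(x)(I_\epsilon\times I_\epsilon)$ tangent to $E^{cu}_x$. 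Continuity of $h^{cu}$ again follows from uniformity of the constructions over the compact-core neighborhood.

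\medskip

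The main obstacle is the center-unstable piece: unlike the stable manifold, $W^{cu}(x)$ is only \emph{locally} invariant rather than dynamically invariant, and its existence is not a straightforward contraction-mapping fixed point — one must set up the plaque-family machinery and verify that the weak domination coming from singular hyperbolicity (with the $E^{cu}$ side controlled only through $\|DX^{-t}|_{E^{cu}}\|$ and the volume bound, not through a pointwise expansion rate) still satisfies the pseudo-hyperbolicity hypotheses needed for the Hirsch–Pugh–Shub center manifold theorem. Near the singularities this requires particular care, since $DX^t$ degenerates along the flow direction; one uses that the singularities are themselves hyperbolic (with eigenvalue inequalities $\alpha_{ss}<\alpha_s<0<-\alpha_s<\alpha_u$) so that the splitting extends continuously across them and the estimates survive on all of $U$ after shrinking. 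Once these invariant families are in hand, the tangency statements $TW^s_\epsilon(x)=E^s_x$ and $TW^{cu}_\epsilon(x)=E^{cu}_x$ are immediate from the construction, and the $C^1$-embedding property and continuous dependence on $x$ give the claimed maps into $\mathcal E^1(I_1,M)$ and $\mathcal E^1(I_1\times I_1,M)$; for this reason I would, in the write-up, simply cite [APPV09] for the full details and record here only the reduction to the standard invariant-manifold theorems.
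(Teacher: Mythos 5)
Your proposal is correct and is essentially the same as what the paper does: the paper offers no proof of this Proposition at all, simply citing [APPV09], and your sketch (graph transform for the uniformly contracting bundle $E^s$, Hirsch--Pugh--Shub plaque families for the dominated, only locally invariant $E^{cu}$ direction, with uniformity over $U$ giving continuity of $h^s$ and $h^{cu}$) is precisely the standard argument carried out in that reference. Your closing decision to record only the reduction and cite [APPV09] for details matches the paper's treatment.
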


We take a cross-section $\Sigma\subset U$ to be any $C^2$ embedded compact disc (diffeomorphic to $[-1,1]\times[-1,1]$) transverse to the $X$ at every point $x\in\Sigma$. As a direct consequence of the Implicit Function Theorem, we define the Poincar\'e map
\begin{eqnarray}
f:\Sigma\rightarrow\Sigma^\prime
\end{eqnarray}
on $\Sigma$ into another cross-section $\Sigma^\prime$ to be of the form $P(x)=X^{\tau(x)}(x)$ and define the Poincar\`e time function $\tau:\Sigma\rightarrow\mathbb R$ for which $X^{\tau(x})(x)\in\Sigma^\prime$. Note that $f$ and $\tau$ need not correspond to the first time that the orbits of $\Sigma$ encounter $\Sigma^\prime$.

\subsubsection{Hyperbolicity of Poincar\'e maps}

Without loos of generallity, we can assume that $\Sigma=\Sigma^\prime$.
The continuity of the invariant splitting $E^s\oplus E^{cu}$  in (1.2) over $U$ induces a continuous splitting
$$E^s_\Sigma\oplus E^{u}_\Sigma$$
of the tangent bundle $T\Sigma$ of $\Sigma$ defined by
\begin{eqnarray}E^s_\Sigma(x)=E^s(x)\cap T_x\Sigma,~~~E^u_\Sigma(x)=E^{cu}(x)\cap T_x\Sigma.
\end{eqnarray}

\begin{prop}[{[APPV09]}]
Suppose that $f$ is a Poincar\'e map on a cross-section $\Sigma$. Then, for every $x\in\Sigma$, we have
\begin{eqnarray}
Df(x)(E^s_\Sigma(x))=E^s_\Sigma(f(x)), ~Df(x)(E^u_\Sigma(x))=E^u_\Sigma(f(x)).
\end{eqnarray}
Moreover, there exists a time $t_0>0$ such that if $\tau(.)>t_0$ then
\begin{eqnarray}
||Df|E^s_\Sigma||<\theta,~||Df|E^u_\Sigma||>\theta^{-1},
\end{eqnarray}
($0<\theta<1$ as in definition 1.1).
\end{prop}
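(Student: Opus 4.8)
The plan is to work, following [APPV09], with an \emph{adapted} cross-section: one takes $\Sigma$ to be foliated by local strong-stable leaves $W^s_\epsilon(\cdot)$, which exist and are tangent to $E^s$ by the preceding proposition, so that $E^s(x)\subset T_x\Sigma$ for every $x\in\Sigma$; then $E^s_\Sigma(x)$ is $1$-dimensional and $T_x\Sigma=E^s_\Sigma(x)\oplus E^u_\Sigma(x)$. Since $\Sigma$ is transverse to $X$ while $X(x)\in E^{cu}(x)$, one also has $E^{cu}(x)=\mathbb R\,X(x)\oplus E^u_\Sigma(x)$ with $\mathbb R\,X(x)$ transverse to $T_x\Sigma$. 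The computational backbone is the derivative formula obtained by differentiating $f(x)=X^{\tau(x)}(x)$, namely $Df(x)v=DX^{\tau(x)}(x)v+(D\tau(x)v)\,X(f(x))$.

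To establish the invariance of the splitting I would first take $v\in E^s_\Sigma(x)$: flow-invariance of $E^s$ gives $DX^{\tau(x)}(x)v\in E^s(f(x))\subset T_{f(x)}\Sigma$, and since also $Df(x)v\in T_{f(x)}\Sigma$ while $X(f(x))$ is transverse to $\Sigma$, the derivative formula forces $D\tau(x)v=0$, hence $Df(x)v=DX^{\tau(x)}(x)v\in E^s_\Sigma(f(x))$. If instead $v\in E^u_\Sigma(x)\subset E^{cu}(x)$, then flow-invariance of $E^{cu}$ together with $X(f(x))\in E^{cu}(f(x))$ yields $Df(x)v\in E^{cu}(f(x))\cap T_{f(x)}\Sigma=E^u_\Sigma(f(x))$. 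As $Df(x)\colon T_x\Sigma\to T_{f(x)}\Sigma$ is an isomorphism compatible with both direct-sum decompositions, these inclusions are equalities.

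The contraction estimate is then immediate: for $v\in E^s_\Sigma(x)$ one has $\|Df(x)v\|=\|DX^{\tau(x)}(x)v\|<\theta^{\tau(x)}\|v\|$ by the contracting property of $E^s$ in Definition~1.1, so any $t_0>1$ gives $\|Df|E^s_\Sigma\|<\theta$ on $\{\tau>t_0\}$. For the expansion estimate one has to compare the $1$-dimensional growth along $E^u_\Sigma$ with the $2$-dimensional volume growth along $E^{cu}$. Taking a unit vector $v\in E^u_\Sigma(x)$, the pair $\{X(x),v\}$ spans $E^{cu}(x)$ and $DX^{\tau(x)}(x)$ sends it to $\{X(f(x)),\,Df(x)v-(D\tau(x)v)X(f(x))\}$; since adding multiples of $X(f(x))$ leaves the parallelepiped volume unchanged, computing the two volumes gives
\[
\bigl|\det\bigl(DX^{\tau(x)}(x)|_{E^{cu}(x)}\bigr)\bigr|
=\frac{\|X(f(x))\|\,\sin\angle\!\bigl(X(f(x)),E^u_\Sigma(f(x))\bigr)}{\|X(x)\|\,\sin\angle\!\bigl(X(x),E^u_\Sigma(x)\bigr)}\,\|Df(x)v\|.
\]
The prefactor is bounded above and below by positive constants because $X$ is nonvanishing on the compact section $\Sigma$ and uniformly transverse to $T\Sigma$; combined with the volume-expansion property of $E^{cu}$ in Definition~1.1 this yields $\|Df(x)|_{E^u_\Sigma(x)}\|\ge\kappa\,e^{\lambda_0\tau(x)}$ for uniform constants $\kappa,\lambda_0>0$, and enlarging $t_0$ so that $\kappa e^{\lambda_0 t_0}>\theta^{-1}$ completes the argument.

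The main obstacle is precisely this last step: in contrast with $E^s_\Sigma$, the bundle $E^u_\Sigma$ is only $Df$-invariant and not $DX^t$-invariant, so its expansion cannot be read off Definition~1.1 directly; the volume expansion of $E^{cu}$ must be transferred onto $E^u_\Sigma$ by quotienting out the flow line, which is where the uniform control of $\|X\|$ and of the angle between $X$ and $E^u_\Sigma$ over $\Sigma$ enters. A secondary technical point is the construction of adapted cross-sections together with the verification, through the Implicit Function Theorem, that $f$ and $\tau$ have the stated regularity, so that the derivative formula above is legitimate.
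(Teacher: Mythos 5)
The paper gives no proof of this proposition---it is quoted from [APPV09]---and your reconstruction is essentially the argument from that reference: the invariance follows by projecting $DX^{\tau(x)}$ onto $T\Sigma$ along the flow direction, and the expansion on $E^u_\Sigma$ is extracted from the volume expansion on $E^{cu}$ by quotienting out the flow line, the prefactor being controlled because $\|X\|$ and the transversality angle are uniformly bounded on the compact cross-sections. Your argument is correct; note only that the final step needs genuine volume expansion $|\det(DX^t|_{E^{cu}})|\ge C e^{\lambda t}$ with $\lambda>0$ (as in [APPV09]), whereas Definition 1.1(3) as literally printed ($\ge\exp(-\theta t)$) is an evident typo and too weak to yield $\|Df|_{E^u_\Sigma}\|>\theta^{-1}$.
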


We exhibit the stable and unstable manifolds for a Poincar\'e map $f:\Sigma\rightarrow\Sigma$
to be the natural candidates $W^s_\Sigma(x)=W^s_\epsilon(x)\cap\Sigma$ and $W^u_\Sigma(x)=W^{cu}_\epsilon(x)\cap\Sigma$.
We also can take adapted cross-sections for which theses stable and unstable manifolds are invariant: 
$f(W^s_\Sigma(x))\subset W^s_\Sigma(f(x))$ and $f(W^u_\Sigma(x))\subset W^{u}_\Sigma(f(x))$.

\subsection{A global Poincar\'e map}

Let $\Lambda$ be a singular hyperbolic set for a vector field $X\in\mathcal C^1(M)$. 
We assume that the set $Sing(\Lambda)$ of the singularities of $\Lambda$ is not empty (otherwise $\Lambda$ is hyperbolic).

We take an adapted linearizing neighborhood of a normalized singularity $\sigma=(0,0,0)$ (and then for all singularities), for which\\
1. According to the eigenvalues of hyperbolic singularities in (1.1)  the vector field has the form
$$X(x,y,z)=(\alpha_{ss}x,\alpha_uy+o(x,y,z),\alpha_sz+o(x,z))$$
where $o$ denotes the corresponding higher order terms. Hence the flow is locally given by
$$X^t(x,y,z)=(x\exp(t\alpha_{ss}),y\exp(t\alpha_u)+o(x,y,z),z\exp(t\alpha_s)+o(x,z)).$$
2. We can take $W^s_{loc}(\sigma)=\{y=0\},~W^{ss}_{loc}(\sigma)=\{y=z=0\}$ and $W^u_{loc}(\sigma)=\{x=z=0\}$. \\
3. The planes $z=1$ and $z=-1$ are transversal to the flow where the vector field points inward the region containing the singularity. Then, we can find two rectangles
$$\Sigma^+_\sigma=\{(x,y,1)|~-1\leq x\leq 1,~1\leq y\leq 1\}\subset\{z=1\}$$
and
$$\Sigma^-_\sigma=\{(x,y,-1)|~-1\leq x\leq 1
,~-1\leq y\leq 1\}\subset\{z=-1\}$$
4. There exist two disks $\Delta^+_\sigma\subset \{y=1\}$ and $\Delta^-_\sigma\subset\{y=-1\}$ that for any point $x\in\Sigma_+\cup\Sigma_-$ there is $t>0$ such that $X^t(x)\in\Delta_+\cup\Delta_-$.\\
5. We set $\Gamma_\sigma=\Sigma_\sigma\cap W^s_{loc}(\sigma)$. Then $\Gamma_\sigma$ divides $\Sigma_\sigma$ into two semi boxes.\\

image 1\\

We denote by $\Sigma=\bigcup_{\sigma\in Sing(\Lambda)}\Sigma_\sigma$ a system of transversal sections and $\Gamma=\bigcup_{\sigma\in Sing(\Lambda)}\Gamma_\sigma$. In this way, we have a global Poincar\'e map
\begin{eqnarray}
f:\Sigma\backslash\Gamma\rightarrow\Sigma.
\end{eqnarray}
defined by $f(x)=X^{\tau(x)}(x)$ where $\tau:\Sigma\backslash\Gamma\rightarrow[0,+\infty)$ is the return time Poincar\'e function (set $\tau|_{\Lambda}\equiv+\infty$).

\subsection{Hyperbolic invariant measures}

From now on, we assume that $X^t$ is a $C^2$ flow on $M$. 
It is well known [PT93], under this assumption, that the stable leaf $W^s_\Sigma(x)$, for every $x\in\Sigma$,
is a $C^2$ embedded disk and this leaves define a $C^{1+\rho}$ foliation $\mathcal F_\sigma$ of each $\Sigma_\sigma\in\Sigma$.
The canonical projection $\pi_s:\Sigma\rightarrow\mathcal F$ 
which assigns to any $x\in\Sigma$ the atom $\xi\in\mathcal F$ for which $x\in\xi$,
induces a one dimentional map $h:\mathcal F\backslash\Gamma\rightarrow\mathcal F$ defined by
$$h\circ\pi_s=\pi_s\circ f.$$
$h$ is a $C^{1+\rho}$ piece-wise expanding map. It is clear that$\pi_s$ induces measurable and topologic structures of $\Sigma$ to $\mathcal F$.

There exists an absoloutly continuous (with respect to Lebesgue measure) probability measure $\mu_h$ on $\mathcal F$ invariant by $h$ 
(see for instance [V97]). Then, the probability measure $\mu_f$ defined on $\Sigma $ by
\begin{eqnarray}
\int\eta d\mu_f=\lim_{n\rightarrow+\infty}\int\inf_{x\in\xi}(\eta\circ f^n)d\mu_h=\lim_{n\rightarrow+\infty}\int\sup_{x\in\xi}(\eta\circ f^n)d\mu_h,
\end{eqnarray}
for any continuous function $\eta:\Sigma\rightarrow\mathbb R$, is invariant by $f$. More over, $\mu_f$ is ergodic if $\mu_h$ is ergodic [APPV09].

An invariant probability measure is hyperbolic if all Lyapunov exponents with respect to the dynamic cocycle are non-zero.

\begin{lem}
The unique invariant probability measure $\mu_f$ is hyperbolic.
\end{lem}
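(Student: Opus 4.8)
The plan is to exploit the continuous $Df$-invariant dominated splitting $E^s_\Sigma\oplus E^u_\Sigma$ of $T\Sigma$ from Proposition 3.4 and to show that, with respect to $\mu_f$, the Lyapunov exponent along $E^s_\Sigma$ is strictly negative and the one along $E^u_\Sigma$ is strictly positive. Since the fibres $T_x\Sigma$ are $2$-dimensional there are at most two exponents, and since these two turn out to be distinct the invariant splitting must be the Oseledets splitting (with $E^s_\Sigma$ the bottom piece of the filtration); hence all exponents are nonzero and $\mu_f$ is hyperbolic. Two standing facts will be used repeatedly. First, $\mu_f$ is ergodic (as $\mu_h$ is the absolutely continuous invariant measure of the transitive piece-wise expanding map $h$), so, setting $\tau_n(x)=\sum_{i=0}^{n-1}\tau(f^i(x))$, i.e.\ $f^n(x)=X^{\tau_n(x)}(x)$, one has $\tau_n(x)/n\to\bar\tau:=\int\tau\,d\mu_f$ for $\mu_f$-a.e.\ $x$ by Birkhoff's theorem, with $\bar\tau\in(0,+\infty)$: positivity because $\tau>0$, finiteness because $\int\tau\,d\mu_f<\infty$ for singular hyperbolic attractors (see [APPV09]); this last fact also supplies the integrability hypothesis needed to apply Oseledets' theorem to $Df$. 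Second, because $\Sigma$ is compact and uniformly transverse to $X$, and the adapted sections are chosen so that $W^s_\Sigma(x)=W^s_\epsilon(x)\cap\Sigma$, one has $E^s_\Sigma(x)=E^s(x)\subset T_x\Sigma$; consequently, for $v\in E^s_\Sigma(x)$, the extra term $(d\tau_x\cdot v)\,X(f(x))$ in the chain rule $Df(x)v=DX^{\tau(x)}(x)v+(d\tau_x\cdot v)X(f(x))$ must vanish (its left-hand side lies in $T_{f(x)}\Sigma$, while $X(f(x))$ is transverse to it), so $Df(x)|_{E^s_\Sigma(x)}=DX^{\tau(x)}(x)|_{E^s(x)}$.

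Iterating the last identity gives $Df^n(x)|_{E^s_\Sigma(x)}=DX^{\tau_n(x)}(x)|_{E^s(x)}$, whence $\|Df^n(x)|_{E^s_\Sigma(x)}\|\le\theta^{\tau_n(x)}$ by the contraction of $E^s$. Dividing by $n$ and letting $n\to\infty$, the Lyapunov exponent of $\mu_f$ along $E^s_\Sigma$ is at most $\bar\tau\log\theta<0$.

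For the unstable direction I would pass to the quotient. The map $h\colon\mathcal F\setminus\Gamma\to\mathcal F$ satisfies $\pi_s\circ f=h\circ\pi_s$ with $(\pi_s)_*\mu_f=\mu_h$, and it is a $C^{1+\rho}$ piece-wise expanding map, so there are $C>0$ and $\sigma>1$ with $|(h^n)'|\ge C\sigma^n$; hence $\int\log|h'|\,d\mu_h=\lim_n\frac1n\int\log|(h^n)'|\,d\mu_h\ge\log\sigma>0$. Since $\mathcal F$ is a $C^{1+\rho}$ foliation with uniformly bounded geometry (this is where $X^t\in C^2$ is used) and the angle between $E^s_\Sigma$ and $E^u_\Sigma$ is bounded away from $0$ on $\Sigma$, the derivative of $\pi_s$ restricted to $E^u_\Sigma$ is bounded above and below by positive constants. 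Differentiating $\pi_s\circ f^n=h^n\circ\pi_s$ along a nonzero $v\in E^u_\Sigma(x)$ therefore yields $\big|\frac1n\log\|Df^n(x)v\|-\frac1n\log|(h^n)'(\pi_s x)|\big|\to0$, so the Lyapunov exponent of $\mu_f$ along $E^u_\Sigma$ equals $\int\log|h'|\,d\mu_h>0$. (Equivalently, one can use the area-expansion of $E^{cu}$ together with the vanishing exponent along the flow to force expansion transverse to $X$, then push this through $f$; or note that $\mu_f$ is the cross-sectional measure of the physical measure $\mu$ of the flow, whose exponents $\lambda^s_{\mathrm{flow}}<0<\lambda^u_{\mathrm{flow}}$, with zero exponent in the flow direction, are related to those of $f$ by the factor $\bar\tau$.)

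Putting the two bounds together, the two Lyapunov exponents of $\mu_f$ satisfy $\lambda^s\le\bar\tau\log\theta<0<\int\log|h'|\,d\mu_h\le\lambda^u$, so $\mu_f$ is hyperbolic. The one nontrivial point is the unstable estimate: one needs the stable foliation to be regular enough ($C^{1+\rho}$) that the projection/holonomy factors relating $Df^n|_{E^u_\Sigma}$ to $(h^n)'$ stay bounded, and one needs to control the neighbourhood of $\Gamma$, where $\tau$ and the derivatives of $f$ blow up — which is precisely where the integrability $\int\tau\,d\mu_f<\infty$ from [APPV09] is essential, both for Oseledets and for the Birkhoff averages above to be finite.
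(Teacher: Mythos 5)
Your proof is correct in substance, but it takes a genuinely longer route than the paper, which disposes of the lemma in one line. The paper simply invokes the hyperbolicity of the Poincar\'e map (its Proposition 3.2, the statement you cite for the invariant splitting): on an adapted cross-section with return time $\tau>t_0$ one has the \emph{uniform} bounds $\|Df|E^s_\Sigma\|<\theta<1$ and $\|Df|E^u_\Sigma\|>\theta^{-1}>1$, and integrating these along orbits gives exponents $\le\log\theta<0$ on $E^s_\Sigma$ and $\ge-\log\theta>0$ on $E^u_\Sigma$ for \emph{every} $f$-invariant measure; no ergodicity, no finiteness of $\int\tau\,d\mu_f$, and no quotient map $h$ are needed. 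What you do instead is essentially re-derive the two halves of that proposition from scratch: the stable estimate from the flow contraction on $E^s$ via the chain rule for $X^{\tau(x)}$, and the unstable estimate by conjugating to the one-dimensional piecewise expanding quotient $h$ through $\pi_s$. The payoff of your version is the sharper identification of the exponents ($\lambda^s$ as $\bar\tau$ times the flow exponent, $\lambda^u=\int\log|h'|\,d\mu_h$), which the paper does not need here (the rescaling by $\bar\tau$ reappears later in Lemma 4.2). Two points you should tighten if you keep your route: (i) the paper defines $E^s_\Sigma(x)=E^s(x)\cap T_x\Sigma$, and on a cross-section the one-dimensional bundle $E^s(x)$ is generally \emph{not} contained in $T_x\Sigma$; the correct object is $\bigl(E^s(x)\oplus\langle X(x)\rangle\bigr)\cap T_x\Sigma$, so your identity $Df|_{E^s_\Sigma}=DX^{\tau(x)}|_{E^s}$ holds only after composing with the (uniformly bounded) projection onto $T\Sigma$ along the flow direction --- harmless for the exponent, but not literally as stated; (ii) the two-sided bound on $D\pi_s|_{E^u_\Sigma}$ requires the angle between $E^u_\Sigma$ and the leaves of $\mathcal F$ to be bounded away from zero, which does follow from continuity of the splitting and compactness of $\Sigma$ but is doing real work and should be stated, since this is exactly the step where regularity of the stable foliation enters.
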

\begin{proof}
The hyperbolicity of the invariant measure $\mu_f$ is an immediate consequence of hyperbolicity of the Poincar\'e map $f$ stated in Proposition 3.2.
\end{proof}




\subsection{Markov structure for global Poincar\'e maps}

There exists a continuous extension to the closur, of the map $h:\mathcal F\backslash\Gamma\rightarrow\mathcal F$ as a map on intervals of $\mathbb R$. 
Consequently the restriction of $f:\Sigma\backslash\Gamma\rightarrow\Sigma$ to each of the connected components of $\Sigma\backslash\Gamma$ 
admits a continuous extention to the closur, each one collapsing $\Gamma$ to a single point in $\Sigma$. 
For natural convenience and now after, we take $f:\Sigma\rightarrow\Sigma$
to be the global Poincar\'e map extended to a $2$-valued map defined on the whole cross-section $\Sigma$ 
and continuous on each of the connected components of $\Sigma$. 
In the rest of this course, we will consider global Poincar\'e maps as discussed here.

We call stable bundary of $\Sigma_\sigma$ the image of $[-1,1]\times\{-1,1\}$, for some $\Sigma_\sigma$.

\begin{defin}
A connected subset $B$ in $\Sigma$ is a band
if it intersects both connected components of the stable boundary of $\Sigma$ and $B\cap\Lambda\neq\emptyset$. 
\end{defin}

The next theorem guarantees existence of a Markovian structure for Poicar\'e maps.
Assume that $\pi_s$ is the projection map on $\Sigma$ along stable manifolds.

\begin{thm}[{[AP07]}]
There exist a system of transversal sections $\Sigma$ such that for any band $B\subset\Sigma$ there is a sub-band $\hat B\subset B$ 
and a global Poincar\'e map $f:\Sigma\rightarrow\Sigma$ such that $f(\hat B)$ covers some $\Sigma_\sigma$. Moreover, 
$$\pi_s(f(\hat B))=\pi_s(\Sigma_\sigma).$$
\end{thm}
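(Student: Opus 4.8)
The plan is to build the sub-band $\hat B$ by an inductive refinement argument controlled by the one-dimensional quotient dynamics $h:\mathcal F\backslash\Gamma\to\mathcal F$, exploiting that $h$ is a $C^{1+\rho}$ piecewise expanding map. First I would reduce the two-dimensional statement to a one-dimensional one: since $B$ is a band it meets both components of the stable boundary of $\Sigma$, so its projection $\pi_s(B)$ is a full subinterval (an interval that crosses a whole $\mathcal F$-leaf of admissible directions) of some $\pi_s(\Sigma_\sigma)$, and because stable leaves are $f$-invariant ($f(W^s_\Sigma(x))\subset W^s_\Sigma(f(x))$) the problem of finding $\hat B\subset B$ with $\pi_s(f(\hat B))=\pi_s(\Sigma_\sigma)$ is equivalent to finding a subinterval $\hat J\subset \pi_s(B)$ with $h^{N}(\hat J)\supset \pi_s(\Sigma_\sigma)$ for some return count $N$ realized by the Poincaré map $f$, together with a check that the corresponding piece of $B$ still intersects $\Lambda$.

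Next I would run the expansion argument for $h$. By Proposition 3.2, choosing the system of transversal sections $\Sigma$ so that the Poincaré time satisfies $\tau(\cdot)>t_0$, each branch of $f$ (hence of $h$) is uniformly expanding, $\|Df|E^u_\Sigma\|>\theta^{-1}$ with $0<\theta<1$; so $|h^{n}(J)|$ grows by a definite factor at each step until the iterate either covers a full domain interval of $h$ or else lands on the discontinuity set $\Gamma$. Iterating $\pi_s(B)$ forward, after finitely many steps its image has length bounded below away from zero, and by the Markov/covering property of the piecewise expanding quotient map (the branches of $h$ are onto, each sending its domain interval across the full base, as recorded in the geometric model: the return maps associated with $\Sigma^{\pm}_\sigma$ cover $\pi_s(\Sigma_\sigma)$), we can select a further subinterval whose image is exactly $\pi_s(\Sigma_\sigma)$. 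Pulling this subinterval back through the (finitely many) branches involved gives $\hat J\subset\pi_s(B)$, and $\hat B:=\pi_s^{-1}(\hat J)\cap B$ is the desired sub-band; its nonempty intersection with $\Lambda$ follows because $\Lambda$ is $\pi_s$-saturated (stable leaves through points of $\Lambda$ stay in $\Lambda$) and $\hat J$ was chosen to meet $\pi_s(B\cap\Lambda)$, which is possible since $h$ preserves the set $\pi_s(\Lambda)$ and acts on it with the same covering property.

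The identity $\pi_s(f(\hat B))=\pi_s(\Sigma_\sigma)$ is then immediate from the commutation $h\circ\pi_s=\pi_s\circ f$ and the choice $h^{N}(\hat J)=\pi_s(\Sigma_\sigma)$, while $f(\hat B)$ covering some $\Sigma_\sigma$ in the two-dimensional sense uses in addition that $f$ contracts stable leaves, so the image band is again full in the stable direction. I expect the main obstacle to be the careful choice of the system of transversal sections $\Sigma$: one must simultaneously arrange (i) that every return has Poincaré time $>t_0$ so Proposition 3.2 applies, (ii) that the cross-sections are adapted so that $f(W^s_\Sigma(x))\subset W^s_\Sigma(f(x))$, and (iii) that the quotient map $h$ has the clean Markov/covering structure of the geometric Lorenz model near each singularity $\sigma$ (controlling the distortion near $\Gamma_\sigma$, where $\tau\to\infty$). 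Handling the unbounded return time and the attendant distortion of $h$ near $\Gamma$—so that the bounded-distortion estimates used to pull $\hat J$ back remain valid—is the technical heart; the rest is the standard expanding-map covering bookkeeping.
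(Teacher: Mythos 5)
First, a point of reference: the paper does not prove this statement at all --- it is imported verbatim from [AP07] (Arroyo--Pujals), so your attempt has to be measured against the standard proof in that reference rather than against anything in the text.

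Your skeleton (quotient to the one-dimensional map $h$ via $h\circ\pi_s=\pi_s\circ f$, use the uniform expansion $\|Df|E^u_\Sigma\|>\theta^{-1}$ from Proposition 3.2, pull a good subinterval back to a sub-band) is the right one, and your closing remarks correctly identify where the technical work sits (choice of sections with $\tau>t_0$, adaptedness, distortion near $\Gamma$ where $\tau\to\infty$). But the decisive step is circular as written: you obtain the covering by invoking that ``the branches of $h$ are onto, each sending its domain interval across the full base, as recorded in the geometric model.'' That full-branch property \emph{is} the theorem. It holds by construction for the geometric Lorenz model, but the statement is for an arbitrary singular hyperbolic attractor, where it must be derived; note also that, with the paper's conventions ($\Gamma_\sigma$ is a single stable leaf and a band joins the two stable boundary leaves), a band already satisfies $\pi_s(B)=\pi_s(\Sigma_\sigma)$, so if full branches were granted there would be nothing left to prove and your entire iteration scheme would be superfluous.

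The missing mechanism is the role of the singularities, which never enter your argument. The actual proof runs: (i) a \emph{growth lemma} --- under iteration the unstable length of (the longest piece of) the band grows by a definite factor per return, and since it cannot exceed $\operatorname{diam}\Sigma$ the iterates must be cut by $\Gamma$; here one must check that expansion beats the halving caused by each cut, a bookkeeping step you omit (you instead treat landing on $\Gamma$ as a terminal alternative, when it is the engine of the argument); (ii) a piece crossing $\Gamma_\sigma$ enters the linearized neighborhood of $\sigma$ and its image in $\Delta^\pm_\sigma$ accumulates on $W^u_{loc}(\sigma)$; (iii) since $\Lambda$ is a transitive attractor containing $W^u(\sigma)$, that unstable manifold crosses some $\Sigma_{\sigma'}$ completely, hence by continuity so does the image of a sufficiently thin sub-band $\hat B$ chosen close to $\Gamma_\sigma$. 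Steps (ii) and (iii) are absent from your proposal and cannot be replaced by expansion estimates alone; without them the exact equality $\pi_s(f(\hat B))=\pi_s(\Sigma_{\sigma'})$ is not reachable.
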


image 2\\

We take the maximal invariant set $S=\bigcap_{n\geq0}f^n(\Sigma)$. 
By construction and definition of $\mu_f$ in (3.7), we conclude that $S\subset supp(\mu_f)$. 
Then, the next corollary is immediate and states that there exists a global Poincar\'e map which is a Markov map.

\begin{cor}
There exist a global Poincar\'e map $f$ and a partition of $S$ by a collection $\{B(i)\cap S:~i\in\mathbb N\}$ of bands intersecting $S$,
such that the transformation $g=f|_S$ is a return Markov map to $S$.
\end{cor}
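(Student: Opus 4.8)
The plan is to produce the partition by descending to the one-dimensional quotient, building a countable Markov partition there, and lifting it back along the stable foliation. Each section $\Sigma_\sigma$ is a band in the sense of Definition 3.4, so Theorem 3.5 applies to it and, more generally, to any band $B\subset\Sigma$: it yields a sub-band $\hat B\subset B$ and a global Poincar\'e map $f$ with $f(\hat B)$ covering some $\Sigma_\sigma$ and $\pi_s(f(\hat B))=\pi_s(\Sigma_\sigma)$. Reading this through the semiconjugacy $h\circ\pi_s=\pi_s\circ f$, where $h:\mathcal F\backslash\Gamma\to\mathcal F$ is the piecewise $C^{1+\rho}$ expanding quotient map, it says exactly that, for the adapted system of sections, the image under $h$ of each of its finitely many branches covers a full quotient section $\pi_s(\Sigma_\sigma)$; so $h$ enjoys a ``full branch'' property modulo the stable-leaf identifications.

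Granting this, I would let $c=\pi_s(\Gamma)$ be the finite set of collapsed boundary points and let $\mathcal{P}$ be the partition of $\mathcal F$ generated by the branch endpoints together with the grand forward orbit $\bigcup_{n\ge 0}h^n(c)$. Since $h$ carries boundary points to boundary points and, by the covering property above, never maps an element strictly inside another element, $\mathcal{P}$ is a Markov partition: $h$ sends each element onto a union of elements. Intersecting with $\mathrm{supp}(\mu_h)$, a finite union of intervals by the structure theory for piecewise expanding maps, gives a countable Markov partition $\{J(i):i\in\mathbb N\}$ of $\mathrm{supp}(\mu_h)$ modulo a $\mu_h$-null set. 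Setting $B(i)=\pi_s^{-1}(J(i))$, each $B(i)$ lies in a single $\Sigma_\sigma$, crosses it between the two components of its stable boundary, and meets $\Lambda$ because $J(i)$ meets $\pi_s(\Lambda\cap\Sigma_\sigma)$; hence $B(i)$ is a band, and the $B(i)$ are pairwise disjoint.

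Finally I would check that $g=f|_S$ is a return Markov map for $\{B(i)\cap S\}$. One has $\mu_f(f^n(\Sigma))=1$ for every $n$ (by $f$-invariance of $\mu_f$ and $f^{-n}(f^n(\Sigma))\supset\Sigma$), hence $\mu_f(S)=1$; combined with $S\subset\mathrm{supp}(\mu_f)$ and the fact that $\pi_s$ pushes $\mu_f$ to $\mu_h$ by (3.7), the sets $B(i)\cap S$ partition $S$ modulo a $\mu_f$-null set, and $f(S)\subseteq S$ makes $g$ well defined with every point of $S$ returning to $S$. On each connected component of $\Sigma\backslash\Gamma$ the map $f$ is single-valued; using $h(J(i))=\bigcup_{k\in R(i)}J(k)$ together with $f(W^u_\Sigma(x))\subset W^u_\Sigma(f(x))$ one obtains $g(B(i)\cap S)=\bigcup_{k\in R(i)}B(k)\cap S$, which is the Markov property, the return time being the Poincar\'e return time $\tau$.

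I expect the delicate point to be keeping $\mathcal{P}$ countable while ensuring it genuinely covers $\mathrm{supp}(\mu_h)$: this is precisely where one needs the equality $\pi_s(f(\hat B))=\pi_s(\Sigma_\sigma)$ from Theorem 3.5, rather than mere containment, since only then are branch images forced to be exact unions of partition elements and the refinement by the grand orbit of $c$ stabilizes. A secondary, mostly bookkeeping, obstacle is handling the $2$-valued, $\Gamma$-collapsing extension of $f$ so that the images $g(B(i)\cap S)$ line up with unions of partition elements with no leftover pieces.
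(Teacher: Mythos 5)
Your route differs from the paper's (which treats the corollary as immediate from Theorem 3.5 via the standard inducing scheme), and it has a genuine gap at exactly the point you flag. You read Theorem 3.5 as saying that $h$ maps each of its finitely many monotonicity branches onto a full quotient section $\pi_s(\Sigma_\sigma)$. It does not: the theorem only produces, inside each band $B$, a \emph{sub-band} $\hat B$ with $\pi_s(f(\hat B))=\pi_s(\Sigma_\sigma)$, where moreover the Poincar\'e map is adapted and need not be a first return. The monotonicity branches of $h$ (the halves of each $\pi_s(\Sigma_\sigma)$ cut by $\pi_s(\Gamma)$) are in general not full branches, so branch images of $h$ are not unions of sections. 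Consequently your partition $\mathcal P$, generated by the branch endpoints together with the grand forward orbit $\bigcup_{n\geq 0}h^n(c)$, need not stabilize: for a Lorenz-like quotient map the forward orbit of the collapsed point is typically dense in $\mathrm{supp}(\mu_h)$, in which case $\mathcal P$ has no nondegenerate elements and there is no countable interval partition to lift. The equality $\pi_s(f(\hat B))=\pi_s(\Sigma_\sigma)$ cannot rescue this, because it is a property of the \emph{selected} sub-bands, not of the fixed branch structure of $h$ that you are refining.

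The construction the corollary has in mind runs in the opposite order: apply Theorem 3.5 to each half-section to obtain a sub-band mapped onto a full section, then apply it again to each connected component (band) of the complement, and iterate. This produces a countable disjoint family of bands $B(i)$ exhausting $\Sigma$ modulo a $\mu_f$-null set, together with a single global Poincar\'e map $f$ (with band-dependent, non-first-return times) such that $\pi_s(f(B(i)))=\pi_s(\Sigma_{\sigma(i)})$ for every $i$. Since each $\Sigma_{\sigma}\cap S$ is, mod zero, the union of the $B(j)\cap S$ it contains, $g=f|_S$ sends each $B(i)\cap S$ onto a union of partition elements, which is the Markov (indeed full-branch) property. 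Your final verification paragraph would go through essentially unchanged once the bands are produced this way; it is the production of the bands themselves that your argument does not achieve.
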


\subsection{Fiber bunched cocycles}

Assume $0<\theta<1$ the same as in Definition 1.1 and Proposition 3.2.

\begin{defin}
We say that $F_A$ is fiber bunched if there exists some $0<\gamma<0$ such that 
\begin{eqnarray}
||A(x)||~||A(x)^{-1}||~\theta(x)^\eta<\gamma
\end{eqnarray}
for every $x\in\Sigma$ and $0<\theta(x)<\theta$.
\end{defin} 
\begin{rem}
As we mentioned in Reamrk 1.3, if $F_A$ is fiber bunched then there exists a $C^0$-neighborhood of $F_A$ for which any cocycle is fiber bunched
in this neighborhood.
Hence, the set of all fibr bunched linear cocycles is a $C^0$-open set $\mathcal U$ in the space of all $\eta$-H\"older continnuous cocycles. 
\end{rem}

\subsection{Simple Lyapunov spectrum}

Avila and Viana [AV07] extended, a simplicity criterion stated already by Bonatti and Viana [BV04]
for cocycles over shift maps, to cocycles over Markov maps.

\begin{thm}[{[AV07],[F]}]
For any $\eta>0$,o and any hyperbolic invariant probability measure, 
the set of all $C^\eta$ fiber bunched linear cocycles over any Markov map for which all Lyapunov exponents have multiplicity 1,
contains an open and dense subset of $\mathcal U$. 
Moreover, the exceptional set of cocycles has infinite codimension: 
it is contained in a union of closed submanifolds with arbitrary high codimention.
\end{thm}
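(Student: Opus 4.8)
The argument follows Bonatti--Viana [BV04] and Avila--Viana [AV07] (see also [F]); I indicate the main steps. Write $g$ for the Markov map, $\nu$ for the given hyperbolic $g$-invariant probability measure, and $\mathcal U$ for the $C^{0}$-open set of fiber bunched $C^{\eta}$ cocycles. \textbf{Step 1 (invariant holonomies).} First I would use the fiber bunching inequality (3.8) to construct stable holonomies $H^s_{x,y}\colon\mathbb C^d\to\mathbb C^d$, for $y$ in the local stable manifold $W^s_\Sigma(x)$, as the limits
\[
H^s_{x,y}=\lim_{n\to+\infty}A^n(y)^{-1}A^n(x);
\]
these converge geometrically because, by (3.8), the distortion $\|A^n(x)\|\,\|A^n(x)^{-1}\|$ is beaten by the stable contraction of $g^n$ (Proposition 3.2), so the successive differences decay geometrically in $n$. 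Symmetrically one obtains unstable holonomies $H^u_{x,y}$ for $y\in W^u_\Sigma(x)$, using the inverse branches of $g$. These holonomies are $\eta$-H\"older, satisfy the cocycle identities $H^{\ast}_{y,z}H^{\ast}_{x,y}=H^{\ast}_{x,z}$ and the equivariance $H^{\ast}_{g(x),g(y)}A(x)=A(y)H^{\ast}_{x,y}$ for $\ast\in\{s,u\}$, and --- the decisive point for later --- depend continuously on $A$ in the $C^{\eta}$ topology throughout $\mathcal U$.

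\textbf{Step 2 (reduction to the abstract criterion).} Coding $g$ by a subshift of finite type adapted to its Markov partition and transporting $\nu$ and the cocycle, the holonomies of Step 1 let me replace $F_A$ by a cohomologous cocycle that is trivial along local stable and local unstable sets, i.e.\ ``locally constant modulo holonomy''. This places the problem in the framework of [AV07]: it then suffices to exhibit finitely many periodic points of $g$ and heteroclinic connections between them --- available because the image of each Markov piece covers whole pieces --- such that the associated period matrices and holonomy maps are \emph{pinching} (some product of period matrices has eigenvalues of pairwise distinct absolute value) and \emph{twisting} (the relevant holonomies carry the eigenflags of that product into general position). By the criterion of [AV07] (see also [F]), pinching together with twisting forces all $d$ Lyapunov exponents of $F_A$ relative to $\nu$ to be simple.

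\textbf{Step 3 (openness).} The periodic data and, by Step 1, the holonomies vary continuously with $A\in\mathcal U$, whereas ``eigenvalues of pairwise distinct modulus'' and ``flags in general position'' are open conditions. Hence the set of pinching and twisting cocycles --- and a fortiori the set of cocycles with simple spectrum --- is open in $\mathcal U$.

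\textbf{Step 4 (density and infinite codimension).} This is the crux. Given $F_A\in\mathcal U$, choose a periodic point $p$ in the support of $\nu$. A perturbation of $A$ supported in a small neighbourhood of the orbit of $p$ --- legitimate because the Markov/covering property allows such localized changes without leaving $\mathcal U$ --- makes the period matrix pinching; moreover the set of cocycles for which pinching fails along $p$, along $g(p),\dots$, and along all higher iterates is a countable union of proper algebraic subsets of arbitrarily large codimension. Keeping pinching fixed (an open condition), a further perturbation of $A$ near a transverse homoclinic point of $p$ makes the composition $H^uH^s$ twisting, and the failure of twisting along all iterates is again cut out by countably many independent equations of arbitrarily large codimension. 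Intersecting, the cocycles in $\mathcal U$ that are not both pinching and twisting lie in a countable union of closed submanifolds of arbitrarily high codimension; its complement, on which every Lyapunov exponent is simple, is therefore open and dense in $\mathcal U$ --- which is exactly the assertion, infinite codimension included.

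\textbf{Main obstacle.} The delicate part is Step 4 together with the non-compactness of the model: in the intended application the Markov partition has infinitely many pieces and the return time is unbounded, so one must check that the finitely many localized perturbations achieving pinching and twisting do not violate $\|A\|_{r,\rho}<\infty$, the fiber bunching inequality (3.8), or the integrability of $\log^{+}\|A\|$ required by Oseledets, and that the continuity of holonomies in $A$ from Step 1 holds uniformly over $\mathcal U$. Granting the uniform hyperbolicity and distortion estimates for singular hyperbolic return maps (Proposition 3.2 and [APPV09], [AP07]), these points are handled exactly as in [AV07] and [F], and the theorem follows.
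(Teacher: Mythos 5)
The paper offers no proof of this statement --- it is imported verbatim from [AV07] and [F] --- and your sketch is a faithful outline of exactly the argument those references use (holonomies from fiber bunching, reduction to the pinching-and-twisting criterion, openness from continuity of the periodic data and holonomies, density and infinite codimension from localized perturbations). So your proposal is correct and takes essentially the same approach as the paper's source.
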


Therefore, we can conclude this section with the following.

\begin{cor}
There exists a global Poincar\'e map $f:\Sigma\rightarrow\Sigma$ such that, for any $r+\rho>0$, and with respect to $\mu_f$, 
the set of $C^{r,\rho}$ fiber bunched linear cocycles over $f$ for which all Lyapunov exponents have multiplicity 1, 
contains an open and dense subset of $\mathcal U$.
\end{cor}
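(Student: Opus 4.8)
The goal is to deduce Corollary 3.10 by combining the Markov structure for global Poincaré maps (Corollary 3.8) with the Avila--Viana simplicity criterion (Theorem 3.9). The key observation is that both statements concern the \emph{same} global Poincaré map $f:\Sigma\to\Sigma$, so there is nothing to transport between dynamical systems; the work is entirely in reconciling the regularity and the topology in which openness and density are asserted. First I would invoke Corollary 3.8 to fix a global Poincaré map $f:\Sigma\to\Sigma$ together with the countable partition $\{B(i)\cap S\}$ of the maximal invariant set $S$ into bands, so that $g=f|_S$ is a return Markov map. Since $S\subset\mathrm{supp}(\mu_f)$ and $\mu_f$ is the (ergodic) invariant measure carried by this structure, $g$ together with $\mu_f$ is precisely an instance of the objects covered by Theorem 3.9.

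\emph{Matching the cocycle classes.}
Next I would check that a $C^{r,\rho}$ fiber bunched linear cocycle over $f$, in the sense of Section 2 and Definition 3.6, restricts to a $C^\eta$ fiber bunched cocycle over the Markov map $g$ in the sense required by Theorem 3.9, where $\eta=\rho$ if $r=0$ and $\eta=1$ if $r\ge 1$, as fixed in Section 2. The inclusion $\mathcal G^{r,\rho}\subset\mathcal G^{0,\eta}$ (Hölder continuity with exponent $\eta$) is immediate from the defining seminorm, and the fiber bunching inequality $\|A(x)\|\,\|A(x)^{-1}\|\,\theta(x)^\eta<\gamma$ is literally the same condition in both places, with the same $\theta$ from Definition 1.1 and Proposition 3.2; so the open set $\mathcal U$ of fiber bunched $\eta$-Hölder cocycles contains the restrictions of all our $C^{r,\rho}$ fiber bunched cocycles. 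Then Theorem 3.9 applied to $g$ and $\mu_f$ gives an open and dense subset $\mathcal O\subset\mathcal U$ of cocycles with all Lyapunov exponents of multiplicity $1$, and its complement contained in a countable union of closed submanifolds of arbitrarily high codimension.

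\emph{Pulling the conclusion back to the $C^{r,\rho}$ topology.}
The remaining point is that $\mathcal O$ is open and dense in the $C^\eta$ topology, whereas Corollary 3.10 asserts open and dense \emph{within the $C^{r,\rho}$ fiber bunched cocycles}. Openness is automatic: the inclusion $\mathcal G^{r,\rho}\hookrightarrow\mathcal G^{0,\eta}$ is continuous, so $\mathcal O\cap\mathcal G^{r,\rho}$ is $C^{r,\rho}$-open, and the Lyapunov multiplicities are read off from the restriction to $S$, hence unchanged. For density one uses that $C^{r,\rho}$ cocycles are $C^\eta$-dense among $C^\eta$ cocycles (smoothing by mollification on each $\Sigma_\sigma$, keeping the $\eta$-Hölder norm controlled), together with the fact that fiber bunching is a $C^0$-open condition (Remark 3.7), so small $C^\eta$ perturbations inside $\mathcal U$ can be taken again fiber bunched; combining this with $C^\eta$-density of $\mathcal O$ yields that $\mathcal O\cap\mathcal G^{r,\rho}$ is $C^{r,\rho}$-dense in the $C^{r,\rho}$ fiber bunched cocycles. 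The infinite-codimension statement transfers because intersecting the ambient submanifolds with the (closed, for $r+\rho>0$) subspace $\mathcal G^{r,\rho}$ does not decrease codimension. I expect the main obstacle to be exactly this last density-and-codimension step: one must make sure that the $C^\eta$-approximations produced by Theorem 3.9's genericity can be chosen in the smaller class $\mathcal G^{r,\rho}$ without destroying fiber bunching, and that the relevant transversality survives restriction to the higher-regularity subspace. Everything else is a matter of unwinding definitions.
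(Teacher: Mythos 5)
Your proposal follows essentially the same route as the paper: fix the Markov global Poincar\'e map supplied by the Markov-structure corollary, note that $\mu_f$ is the hyperbolic invariant measure (the paper cites its Lemma 3.3 for this, which you leave implicit), and apply the Avila--Viana simplicity criterion for cocycles over Markov maps. The paper's own proof consists of exactly these three steps and says nothing about the $C^{r,\rho}$-versus-$C^{\eta}$ reconciliation that occupies the second half of your argument; that extra care is sensible but goes beyond what the paper records.
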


\begin{proof}
We take the global Poincar\'e map as in Corollary 3.7.which is a Markov map.
By Lemma 3.3, the invariant probability measure $\mu_f$ is hyperbolic.
The proof is now done by the last theorem.
\end{proof}

\section{Cocycles over the suspension flow}

Nowafter, we assume that $f:\Sigma\rightarrow\Sigma$ is the global Poincar\'e map in Corollary 3.14 and $\tau(.)$ the corresponding measurable Poincar\'e time function such that $\inf(\tau)>0$.

To construct a suspension flow from $f$, we define an equivalence relation $\sim$ on $\Sigma\times [0,+\infty)$ generated by $(x,\tau(x))\sim(f(x),0)$.

Let $V=\Sigma\times[0,+\infty)/\sim$ be the qucient space with respect to $\sim$. Then the corresponding canonical projection $\pi:\Sigma\rightarrow V$ induces a topology and a Borel $\sigma$-algebra of measurable sets on $V$. It is easy to check that
\begin{eqnarray}
X_f^t(\pi(x,s))=\pi(x,s+t), ~(x,s)\in\Sigma\times[0,+\infty),~t>0,
\end{eqnarray} 
defines a semi-flow $X_f^t$ on $V$.

By suspension tools on equivalence relation defined by $\sim$, 
we can construct a probability measure, using the product of $\mu_f$ by 1-dimentional lebesgue measure, on $V$.
This implies existence of a unique probability measure $\mu_X$ on $V$ which is invariant by the semi-flow $X_t$.
Moreover, $\mu_X$ is ergodic (see [APPV09] for more details).

Suppose that $F_A^t$ is an $\eta$-H\"older continuous linear cocycle over the suspension flow $X_f^t$. We define a corresponding $\eta$-H\"older continuous linear cocycle over a global Poincar\'e map $f:\Sigma\rightarrow\Sigma$ defined by $A_f:\Sigma\rightarrow GL(d,\mathbb C)$ as
\begin{eqnarray}
A_f(x)=A^{\tau(x)}(x),~x\in\Sigma.
\end{eqnarray}

\begin{lem}
 Assume that $\tau(.)>1$. If $F^t_A$ is a fiber bunched linear cocycle over semi-flow $X^t_f$ 
 then the corresponding linear cocycle defined by $A_f$ over global Poincar\'e map $f$ is fiber bunched.
\end{lem}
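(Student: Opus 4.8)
The plan is to read the fiber bunching inequality for $A_f$ directly off the one for $F^t_A$, evaluated at the return time $t=\tau(x)$. Since $A_f(x)=A^{\tau(x)}(x)$ by definition, $A_f(x)^{-1}=(A^{\tau(x)}(x))^{-1}$, so the quantity $\|A_f(x)\|\,\|A_f(x)^{-1}\|$ entering the fiber bunching condition for cocycles over the Poincar\'e map is literally the flow quantity $\|A^{\tau(x)}(x)\|\,\|A^{\tau(x)}(x)^{-1}\|$ that the fiber bunching hypothesis on $F^t_A$ controls. The only real point is to match the geometric factor: the Poincar\'e contraction rate $\theta(x)$ must be dominated by the flow's uniform stable contraction $\theta^{\tau(x)}$ over one return, so that the factor $\theta^{-\tau(x)\eta}$ coming from the hypothesis gets absorbed.

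First I would take $\theta(x):=\|Df(x)|_{E^s_\Sigma(x)}\|$. Because the cross-sections are the adapted ones from Section~3 --- so that $E^s$ is tangent to $\Sigma$ along $\Sigma$, whence $E^s_\Sigma(x)=E^s(x)$, and $Df(x)$ maps it into $E^s_\Sigma(f(x))\subset T_{f(x)}\Sigma$ with no projection along the flow --- and because the metric is the adapted metric of Definition~1.1, condition 2 of singular hyperbolicity gives, for every $x\in\Sigma$,
\[
\theta(x)=\|Df(x)|_{E^s_\Sigma(x)}\|=\|DX^{\tau(x)}(x)|_{E^s(x)}\|<\theta^{\tau(x)}.
\]
Since $\eta>0$, this yields $\theta(x)^{\eta}<\theta^{\tau(x)\eta}$; and since $\tau(x)>1$ with $0<\theta<1$, it also yields $0<\theta(x)<\theta^{\tau(x)}<\theta$, so $\theta(x)$ lies in the admissible range for the Poincar\'e fiber bunching condition.

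Next I would substitute. Applying the fiber bunching hypothesis on $F^t_A$ at time $t=\tau(x)$ gives a constant $0<\gamma<1$ with $\|A^{\tau(x)}(x)\|\,\|A^{\tau(x)}(x)^{-1}\|\,\theta^{\tau(x)\eta}<\gamma^{\tau(x)}$, and therefore
\[
\|A_f(x)\|\,\|A_f(x)^{-1}\|\,\theta(x)^{\eta}<\|A^{\tau(x)}(x)\|\,\|A^{\tau(x)}(x)^{-1}\|\,\theta^{\tau(x)\eta}<\gamma^{\tau(x)}.
\]
Since $\tau(\cdot)>1$ and $0<\gamma<1$, the map $t\mapsto\gamma^{t}$ is decreasing, so $\gamma^{\tau(x)}<\gamma<1$ uniformly in $x$ (more generally $\gamma^{\tau(x)}\le\gamma^{\inf\tau}<1$ as soon as $\inf\tau>0$). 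Hence $\|A_f(x)\|\,\|A_f(x)^{-1}\|\,\theta(x)^{\eta}<\gamma$ for every $x\in\Sigma$ with $0<\theta(x)<\theta$, and since $A_f$ is the $\eta$-H\"older cocycle already constructed, this is exactly the fiber bunching condition for cocycles over $f$; thus $F_{A_f}$ is fiber bunched.

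The step I expect to be the main obstacle is the bound $\theta(x)<\theta^{\tau(x)}$: it is precisely what cancels the factor $\theta^{-\tau(x)\eta}$ produced by the hypothesis, and for an arbitrary Riemannian metric or non-adapted cross-sections it would hold only up to a uniform constant $C\ge 1$, leaving the estimate $C^{\eta}\gamma^{\tau(x)}$, which need not be $<1$ when one only knows $\tau>1$. This is why the adapted metric of Definition~1.1 and the adapted cross-sections of Section~3 are used; if a constant did survive, one would compensate by enlarging $\inf\tau$, for instance by passing to a power of $f$. A secondary, purely bookkeeping point is that $\tau$ is unbounded near $\Gamma$, but this only drives $\theta^{\tau(x)}$ and $\gamma^{\tau(x)}$ to $0$, and since the Poincar\'e fiber bunching condition is pointwise, no uniform bound on $\|A_f\|$ is needed.
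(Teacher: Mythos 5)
Your proof is correct and follows essentially the same route as the paper: evaluate the flow fiber-bunching inequality at $t=\tau(x)$, identify $\|A_f(x)\|\,\|A_f(x)^{-1}\|$ with $\|A^{\tau(x)}(x)\|\,\|A^{\tau(x)}(x)^{-1}\|$, and use $\tau(\cdot)>1$ together with $0<\gamma<1$ to get $\gamma^{\tau(x)}<\gamma$. The only difference is that the paper simply \emph{defines} $\theta(x):=\theta^{\tau(x)}$ and substitutes, whereas you interpret $\theta(x)$ as the actual contraction rate $\|Df(x)|_{E^s_\Sigma(x)}\|$ and justify the bound $\theta(x)<\theta^{\tau(x)}$ via the adapted metric and adapted cross-sections --- a more careful reading of Definition~3.8 than the paper itself supplies, but not a different argument.
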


\begin{proof}
suppose that $F^t_A$ is a fiber bunched cocycles over the semi-flow $X^t_f$, i.e, for some $0<\gamma<1$ and any $x\in V$,
$$||A^t(x)||||A^t(x)^{-1}||\theta^{t\eta}<\gamma^t,$$
for all $t\in\mathbb R$. Then, in particular,  for $t=\tau(x)$, we have 
$$||A^{\tau(x)}(x)||||A^{\tau(x)}(x)^{-1}||\theta^{\tau(x)\eta}<\gamma^{\tau(x)}.$$
We assume that $\tau(.)>1$ which implies that $\theta(x)=\theta^{\tau(x)}<\theta$, for all $x\in\Sigma$, and $\gamma^{\tau(x)}<\gamma$. Hence, we have
$$||A_f(x)||||A_f(x)^{-1}||\theta(x)^\eta<\gamma,$$
for every $x\in\Sigma$.
\end{proof}

\begin{lem}
Lyapunov exponents with respect to $A^t$ have multiplicity 1 if and only if Lyapunov esxponents with respect to $A_f$ have multiplicity 1.
\end{lem}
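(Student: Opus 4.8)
The plan is to show that the Oseledets splittings of $F_A^t$ and of $F_{A_f}$ agree fiber by fiber over the cross-section $\Sigma$, and that the corresponding Lyapunov exponents differ only by a fixed positive time-change constant; since the multiplicity of an exponent is the dimension of its Oseledets subspace, simplicity of the spectrum then transfers between the two cocycles in both directions.

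First I would record the cocycle identity over return times. For $x\in\Sigma$ and $n\ge 1$, write $S_n\tau(x)=\sum_{j=0}^{n-1}\tau(f^j(x))$ for the Birkhoff sum of the roof function. From $A_f(x)=A^{\tau(x)}(x)$ and the flow cocycle relation $A^{t+s}(x)=A^t(X_f^s(x))A^s(x)$ one gets $A^{S_n\tau(x)}(x)=A_f^n(x)$. The measure $\mu_X$ on $V$ is a probability measure, so $\bar\tau:=\int_\Sigma\tau\,d\mu_f\in(0,+\infty)$ and $\tau\in L^1(\mu_f)$; together with $\inf\tau>0$ and ergodicity of $\mu_f$, Birkhoff's theorem gives $\tfrac1n S_n\tau(x)\to\bar\tau$ for $\mu_f$-a.e. $x$. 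This integrability, together with the uniform sub-exponential bound $\sup_y\|A^{\pm s}(y)\|\le Ce^{c|s|}$ (valid since the base is compact and $\|A^t\|_{r,\rho}<+\infty$ for each $t$), also furnishes the integrability hypothesis $\log^+\|A_f^{\pm 1}\|\in L^1(\mu_f)$ needed to apply Oseledets' theorem to $A_f$.

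Second I would pass to the limit. The set $R$ of Oseledets-regular points of $F_A^t$ is flow-invariant with $\mu_X(R)=1$, so it meets $\Sigma\times\{0\}$ in a set of full $\mu_f$-measure; intersecting with the regular set of $A_f$, fix such a typical $x$ and a nonzero $v$. For $S_n\tau(x)\le t<S_{n+1}\tau(x)$ one has $A^t(x)v=A^{t-S_n\tau(x)}(f^n(x))A_f^n(x)v$ with $0\le t-S_n\tau(x)<\tau(f^n(x))$, hence
\[\bigl|\log\|A^t(x)v\|-\log\|A_f^n(x)v\|\bigr|\le\sup_{0\le s\le\tau(f^n(x))}\log\bigl\|A^{\pm s}(f^n(x))\bigr\|.\]
Since $\tau\in L^1(\mu_f)$ forces $\tau(f^n(x))=o(n)$ a.e., the right-hand side is $o(S_n\tau(x))$, and dividing by $t$ while using $t/S_n\tau(x)\to1$ and $n/S_n\tau(x)\to1/\bar\tau$ yields
\[\lim_{t\to+\infty}\frac1t\log\|A^t(x)v\|=\frac1{\bar\tau}\,\lim_{n\to+\infty}\frac1n\log\|A_f^n(x)v\|.\]
Therefore the Oseledets subspaces of $A_f$ at $x$ coincide with those of $A^t$ at $\pi(x,0)$, and $\lambda_i^{A^t}=\lambda_i^{A_f}/\bar\tau$; the two spectra have the same number of exponents with the same multiplicities. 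Finally I would transport this to all of $V$: every point is $X_f^s(\pi(x,0))$ with $0\le s<\tau(x)$, and $A^s(\pi(x,0))$ is a linear isomorphism carrying the splitting at $\pi(x,0)$ onto the splitting at $X_f^s(\pi(x,0))$, hence preserving dimensions. Consequently all Lyapunov exponents of $A^t$ are simple if and only if all Lyapunov exponents of $A_f$ are simple.

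The step I expect to be the main obstacle is the passage to the limit between consecutive return times, i.e. certifying that the "roof gaps" contribute nothing to the exponent. This rests on $\tau\in L^1(\mu_f)$ (so that $\tau\circ f^n$ is sublinear a.e.) together with the uniform sub-exponential control of the flow cocycle on bounded time windows; once these are in place the rest is Birkhoff's theorem and bookkeeping of the Oseledets data.
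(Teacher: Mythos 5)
Your proposal is correct and follows essentially the same route as the paper: both rest on the identity $A_f^n(x)=A^{s_n(x)}(x)$ with $s_n(x)=\sum_{j=0}^{n-1}\tau(f^j(x))$ and on the Birkhoff convergence $\tfrac1n s_n(x)\to\mu_f(\tau)$, so that the two spectra differ by the factor $\mu_f(\tau)$ and multiplicities are preserved. The only difference is that you supply the details the paper merely asserts — the integrability of $\log^+\|A_f^{\pm1}\|$, the interpolation between consecutive return times via $\tau(f^n(x))=o(n)$, and the transport of the splitting to all of $V$ along the flow.
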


\begin{proof}
The Lyapunov exponents of $A_f$ are obtained by multiplying those of $A^t$ by the average return time
\begin{eqnarray}
s_n(x)=\sum_{j=0}^{n-1}\tau(\hat f^j(x)),~x\in\Sigma.
\end{eqnarray}
Given any non-zero vector $v$, we have
\begin{eqnarray}
\lim_{n\rightarrow+\infty}\frac{1}{n}\log||A_f^n(x)v||=\lim_{n\rightarrow+\infty}\frac{1}{n}\log||A^{s_n(x)}(x)v||.
\end{eqnarray}
But, for $\mu$-almost every ponit $x\in\Sigma$, (4.3) is equal to
$$\lim_{n\rightarrow+\infty}\frac{1}{n}s_n(x)\lim_{m\rightarrow+\infty}\frac{1}{m}\log||A^m(x)v||.$$
As $\frac{1}{n}s_n(x)$ converges to $\mu_f(\tau)<+\infty$, the proof is complete.
\end{proof}

\begin{prop}
Suppose that $F_A^t$ is a fiber bunched linear cocycle over the suspension flow $X^t_f$. Then the map
\begin{eqnarray}
A^t\mapsto A_f\in\mathcal U\subset\mathcal G^{r,\rho}(\Sigma,d,\mathbb C)
\end{eqnarray}
is a submersion.
\end{prop}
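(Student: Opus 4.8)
The plan is to recognize the map $\Phi\colon A^t\mapsto A_f$ of (4.5) as the restriction of a bounded linear map and to exhibit an explicit bounded right inverse of its derivative; surjectivity of the derivative together with a complement for its kernel is exactly what is needed for $\Phi$ to be a submersion. The first step is to set up the dictionary between a linear cocycle over the suspension and its ``fundamental solution along one fibre''. A continuous linear cocycle $F_A^t$ over $X_f^t$ on $V$ is completely determined by the data
\[
\mathcal A(x,t):=A^t(\pi(x,0)),\qquad x\in\Sigma,\ 0\le t\le\tau(x),
\]
subject only to $\mathcal A(x,0)=\mathrm{id}$: indeed $A^t(\pi(x,s))=\mathcal A(x,s+t)\,\mathcal A(x,s)^{-1}$ whenever $0\le s\le s+t\le\tau(x)$, and for larger times one concatenates across the identification $(x,\tau(x))\sim(f(x),0)$ using $\mathcal A(x,\tau(x))$, which causes no compatibility obstruction since $A^0\equiv\mathrm{id}$. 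Regarding $\mathcal A$ and $A_f$ as functions valued in the space of $d\times d$ complex matrices (the domain of $\Phi$ being the open subset cut out by invertibility and fiber bunching), the map $\Phi$ is just evaluation, $\Phi(\mathcal A)(x)=\mathcal A(x,\tau(x))=A_f(x)$, hence the restriction of a bounded linear map; in particular $\Phi$ is smooth, with $D\Phi(\dot{\mathcal A})(x)=\dot{\mathcal A}(x,\tau(x))$.

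The second step is to build a bounded linear section of $D\Phi$. Since $\inf\tau>0$, fix a $C^\infty$ cut-off $\psi\colon[0,+\infty)\to[0,1]$ with $\psi\equiv0$ on $[0,\tfrac13\inf\tau]$ and $\psi\equiv1$ on $[\tfrac12\inf\tau,+\infty)$, and for $\dot A_f\in\mathcal G^{r,\rho}(\Sigma,d,\mathbb C)=T_{A_f}\mathcal U$ set
\[
\sigma(\dot A_f)(x,t):=\psi(t)\,\dot A_f(x).
\]
Then $\sigma(\dot A_f)(x,0)=0$ and, because $\tau(x)\ge\inf\tau$, $\sigma(\dot A_f)(x,\tau(x))=\dot A_f(x)$; since $\tau$ does not occur in the formula, $\sigma(\dot A_f)$ is $C^{r,\rho}$ with $\|\sigma(\dot A_f)\|_{r,\rho}\le C_\psi\,\|\dot A_f\|_{r,\rho}$ and no regularity of $\tau$ is needed. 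Thus $D\Phi\circ\sigma=\mathrm{id}$: $D\Phi$ is surjective and its kernel $\{\dot{\mathcal A}:\dot{\mathcal A}(\cdot,\tau(\cdot))\equiv0\}$ is complemented by the image of $\sigma$.

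The third step is to verify that the associated perturbation stays admissible. For $|\varepsilon|$ small, $\mathcal A_\varepsilon:=\mathcal A+\varepsilon\,\sigma(\dot A_f)$ takes values in $GL(d,\mathbb C)$, satisfies $\mathcal A_\varepsilon(\cdot,0)\equiv\mathrm{id}$, agrees with $\mathcal A$ for $t$ near $0$ (as $\psi$ vanishes there), and has $\mathcal A_\varepsilon(x,\tau(x))=A_f(x)+\varepsilon\,\dot A_f(x)$; a short check using these facts shows that the cocycle $F_{A_\varepsilon}^t$ reconstructed from $\mathcal A_\varepsilon$ by the cocycle relation is well defined and continuous on $V$, and is of class $C^{r,\rho}$ because $\mathcal A$, $A_f$ and $\dot A_f$ are. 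Since $\|\mathcal A_\varepsilon-\mathcal A\|_0=O(\varepsilon)$ and fiber bunching over $X_f^t$ is a $C^0$-open condition (Remark 1.3), $F_{A_\varepsilon}^t$ remains fiber bunched for $\varepsilon$ small, and by Lemma 4.1 its Poincar\'e cocycle is $(A_\varepsilon)_f=A_f+\varepsilon\,\dot A_f\in\mathcal U$. Hence $\Phi$ carries a neighbourhood of $F_A^t$ onto a neighbourhood of $A_f$ in $\mathcal U$ and admits the smooth local section $\varepsilon\mapsto F_{A_\varepsilon}^t$; that is, $\Phi$ is a submersion.

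The part I expect to be delicate is not this last ``soft'' splitting argument but the bookkeeping behind the first step: one must check carefully that a linear cocycle over the suspension semi-flow is genuinely equivalent to its fundamental-solution data \emph{with the claimed regularity}, in spite of $f$ being only piecewise continuous across $\Gamma$ and of $\tau$ being merely measurable and unbounded; in particular one needs that $\Phi$ is a well-defined bounded linear map on the chosen function space and that the reconstructed $F_{A_\varepsilon}^t$ indeed lies in $\mathcal G^{r,\rho}$ over $X_f^t$. Once the charts and the precise domain of $\Phi$ are fixed, the cut-off $\psi$ and the section $\sigma$ above yield the submersion with essentially no further work.
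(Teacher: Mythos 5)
Your proposal is correct and follows essentially the same route as the paper: identify the derivative of $A^t\mapsto A_f$ as evaluation of the tangent cocycle at the return time, then prove surjectivity by lifting an arbitrary $B\in\mathcal G^{r,\rho}(\Sigma,d,\mathbb C)$ to a tangent vector over the suspension flow. Your version is in fact more careful than the paper's one-line ``natural suspension'' $B^t(x)=(B(x),t)$: the cut-off $\psi$ guarantees the lift vanishes at $t=0$ (as a tangent vector to a cocycle with $A^0=\mathrm{id}$ must), gives a bounded section splitting the kernel, and checks that the perturbed cocycle stays fiber bunched --- details the paper leaves implicit.
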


\begin{proof}
By definition, for any tangent vector $B^t$ in the tangent space of a linear cocycle $A^t\in\mathcal C^{r,\rho}(V,d,\mathbb C)$, we have
\begin{eqnarray}
(\partial_{A^t})(A_f)(B^t)=B_f.
\end{eqnarray}
The derivative in (4.6) is surjective. 

For any $B\in\mathcal G^{r,\rho}(\Sigma,d,\mathbb C)$, we consider the natural suspension $B^t$ of $B$ generated by
\begin{eqnarray}
B^t(x)=(B(x),t),~0\leq t<\tau(x),
\end{eqnarray}
By definition, $B^t$ is an $\eta$-H\"older linear cocycle over the semi-flow $X_f^t$. Then, the derivative is surjective.
\end{proof}
\begin{cor}
There exists a semi-flow $X^t_f$ such that, for any $r+\rho>0$ and with respect to $\mu_X$,
the set of $C^{r,\rho}$ fiber bunched linear cocycles over $X^t_f$ for which all Lyapunov exponents have multiplicity 1, 
contains an open and dense subset of $C^{r,\rho}$-open set $\mathcal V$ of all fiber bunched linear cocycles in $\mathcal G^{r,\rho}(V,d,\mathbb C)$.
Even more, The complement set of cocycles corresponds to infinite codimention.
\end{cor}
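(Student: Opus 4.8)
The plan is to transport the conclusion of Corollary 3.14 from cocycles over the global Poincar\'e map $f$ to cocycles over the suspension semi-flow $X^t_f$ by pulling it back along the map $\Phi\colon A^t\mapsto A_f$ of Proposition 4.3. First I would fix the semi-flow $X^t_f$ built in this section from the Markov global Poincar\'e map $f$ of Corollary 3.14; since $\inf(\tau)>0$, after a harmless time change (equivalently, replacing $f$ by a suitable return power, which affects neither fiber bunching --- up to adjusting the constants --- nor simplicity of the Lyapunov spectrum) I may assume $\tau(\cdot)>1$, so that Lemmas 4.1 and 4.2 apply. Write $\mathcal V\subset\mathcal G^{r,\rho}(V,d,\mathbb C)$ for the set of fiber bunched cocycles over $X^t_f$ and $\mathcal U\subset\mathcal G^{r,\rho}(\Sigma,d,\mathbb C)$ for the set of fiber bunched cocycles over $f$; both are $C^{r,\rho}$-open because fiber bunching is a $C^0$-open condition. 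By Lemma 4.1, $\Phi(\mathcal V)\subset\mathcal U$; by Proposition 4.3, $\Phi\colon\mathcal V\to\mathcal U$ is a submersion, hence continuous and open, and the natural suspension (4.7) realises every $B\in\mathcal U$ as $\Phi(B^t)$, so $\Phi$ is onto.

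Next I would identify the sets involved. By Lemma 4.2, the Lyapunov exponents of $A^t$ with respect to $\mu_X$ all have multiplicity $1$ exactly when those of $A_f=\Phi(A^t)$ with respect to $\mu_f$ do; hence, if $\mathcal S\subset\mathcal U$ denotes the set of fiber bunched cocycles over $f$ with simple Lyapunov spectrum, then $\Phi^{-1}(\mathcal S)$ is precisely the set of fiber bunched cocycles over $X^t_f$ with simple spectrum. By Corollary 3.14 there is an open dense $\mathcal S_0\subset\mathcal U$ with $\mathcal S_0\subset\mathcal S$, and by the codimension clause of the Avila--Viana simplicity criterion used to prove it, $\mathcal U\setminus\mathcal S_0$ is contained in a union of closed submanifolds of $\mathcal U$ of arbitrarily high codimension.

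The remaining step is to pull these two properties back through $\Phi$. Since $\Phi$ is continuous, $\Phi^{-1}(\mathcal S_0)$ is $C^{r,\rho}$-open. Since $\Phi$ is an open surjection onto $\mathcal U$, for any nonempty open $W\subset\mathcal V$ the set $\Phi(W)$ is nonempty and open in $\mathcal U$, hence meets the dense set $\mathcal S_0$, so $W$ meets $\Phi^{-1}(\mathcal S_0)$; thus $\Phi^{-1}(\mathcal S_0)$ is dense in $\mathcal V$, and being contained in $\Phi^{-1}(\mathcal S)$ it is the open dense subset required. For the codimension statement, a submersion is transverse to every submanifold, so the $\Phi$-preimage of a closed submanifold of codimension $c$ is a closed submanifold of codimension $c$; applying this to the submanifolds covering $\mathcal U\setminus\mathcal S_0$ shows that $\mathcal V\setminus\Phi^{-1}(\mathcal S_0)$ --- and a fortiori the genuine exceptional set $\mathcal V\setminus\Phi^{-1}(\mathcal S)$ --- lies in a union of closed submanifolds of arbitrarily high codimension. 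I expect the main point to watch is that all of this is carried out in infinite-dimensional Banach manifolds: one needs $\Phi$ to be a bona fide $C^1$ submersion with split kernel (which Proposition 4.3 supplies through the explicit right inverse (4.7)) and the submersion preimage theorem for submanifolds in the $C^{r,\rho}$ Banach category, which is precisely where the hypothesis $r+\rho>0$, placing us in a Banach rather than a merely $C^0$ setting, is essential.
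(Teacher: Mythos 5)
Your argument is correct and is exactly the one the paper intends: the corollary is obtained by transporting Corollary 3.14 through Lemmas 4.1--4.2 and the submersion of Proposition 4.3, which is precisely how you assemble it (the paper simply omits writing the proof out). Your additional care about openness of the submersion, the right inverse (4.7), and the transversality pullback of the high-codimension exceptional set only makes explicit what the paper leaves implicit.
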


\section{Cocycles over singular hyperbolic flows}

We define a map $\phi:\Sigma\times[0,+\infty)\rightarrow U$ by
\begin{eqnarray}
\phi(x,t)=X^t(x).
\end{eqnarray}
But, as $\phi(x,\tau(x))=\phi(f(x),0)$, we can deduce a natural qucient map $\Phi:V\rightarrow U$ for which
\begin{eqnarray}
X^t\circ\Phi=\Phi\circ X_f^t,
\end{eqnarray}
through the equivalence relation $\sim$ defined earlear. Then, $\Phi$ is a homeomorphism onto its image $\Phi(V)$ and, even more, it is a diffeomorphism on the full lebesgue measure set $V\backslash\pi(\Gamma)$.

We define the basin set of a probability measure $\mu$ to be the set $B(\mu)$ of points $x\in M$  for which
$$\lim_{t\rightarrow+\infty}\frac{1}{t}\int_0^t\phi(X^t(x))dt=\int\phi d\mu,$$
for any measurable function $\phi:M\rightarrow\mathbb R$. An invariant probability measure $\mu$ is then a physical measure for the flow $X^t$ if the basin set of $\mu$
has positive Lebesgue measure: $m(B(\mu))>0$. As $\Phi$ is a homeomorphism on its image, we can define a measure on $\Phi(v)$, using $\mu_X$, and then we have tha following.

\begin{thm}[{[APPV09]}]
There exists a unique invariant physical probability measure $\mu$ supported on $\Lambda$ which  is ergodic and hyperbolic.
\end{thm}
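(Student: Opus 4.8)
The plan is to transport the ergodic invariant measure $\mu_X$ of the suspension semi-flow $X_f^t$ on $V$ back to the attractor through the semi-conjugacy $\Phi$ of Section 5, and to deduce each asserted property of $\mu$ from the corresponding property already established downstairs. Concretely, set $\mu=\Phi_*\mu_X$. Since $X^t\circ\Phi=\Phi\circ X_f^t$ and $\Phi$ is a homeomorphism onto its image, $\mu$ is a Borel probability on $M$, it is $X^t$-invariant, and it is ergodic because ergodicity passes to measurable factors. To see $\operatorname{supp}(\mu)\subset\Lambda$, recall that $\mu_f$ is carried by $\overline{S}$ with $S=\bigcap_{n\ge 0}f^n(\Sigma)\subset\operatorname{supp}(\mu_f)$, so $\mu_X$ is carried by the suspension of $S$; the $\Phi$-image of that set consists of orbit segments that remain in the isolating neighborhood $U$ for all positive times, hence lies in $\bigcap_{t>0}X^t(U)=\Lambda$.

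For hyperbolicity, observe that along the suspension the dynamical cocycle $DX^t$ factors, up to the flow direction which carries Lyapunov exponent $0$, through the Poincar\'e cocycle $Df$: by the averaging argument of Section 4 (the same computation that relates the Lyapunov exponents of $A_f$ to those of $A^t$ via the mean return time), for $\mu_f$-a.e.\ $x$ the nonzero Lyapunov exponents of $DX^t$ at $\Phi(x,0)$ equal those of $Df$ at $x$ divided by the finite average $\mu_f(\tau)$. By Lemma 3.3, equivalently by the uniform estimates of Proposition 3.2, the exponents of $Df$ are all nonzero, so the Oseledets splitting of $DX^t$ over $\mu$ has the form $E^s\oplus E^X\oplus E^u$ with $E^X$ the one-dimensional flow line of zero exponent and $E^s$, $E^u$ of strictly negative, resp.\ strictly positive, exponent; this is exactly the hyperbolicity of $\mu$.

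Physicality is where the real work lies. The measure $\mu_h$ on $\mathcal F$ is absolutely continuous with respect to Lebesgue, so its ergodic basin has positive Lebesgue measure in $\mathcal F$; pulling this back along the stable projection $\pi_s$ and invoking absolute continuity of the $C^{1+\rho}$ stable foliation of each $\Sigma_\sigma$, the basin of $\mu_f$ has positive Lebesgue measure in $\Sigma$. Saturating by the flow inside the flow-box neighborhood $U$, where $\Phi$ is a diffeomorphism off the Lebesgue-null set $\pi(\Gamma)$, and using that Lebesgue-a.e.\ point of $U$ eventually meets $\Sigma$, one produces a positive-Lebesgue-measure set of forward orbits along which the Birkhoff averages of every continuous observable converge to its $\mu$-integral; hence $m(B(\mu))>0$.

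Uniqueness comes by running the construction backwards: any physical measure of $X^t$ supported on $\Lambda$ induces, via first return to $\Sigma$ and projection along stable leaves, an absolutely continuous $h$-invariant probability on $\mathcal F$, and transitivity of $\Lambda$ forces $h$ to be eventually topologically transitive on the relevant interval system, where the absolutely continuous invariant probability is unique; hence $\mu_h$, and therefore $\mu_f$, $\mu_X$ and $\mu$, are unique. I expect the delicate step to be the physicality claim: bounding the Lebesgue measure of the basin from below requires absolute continuity of the stable holonomies together with a careful treatment of the singular set $\Gamma$, where the Poincar\'e map is only $2$-valued and $\Phi$ degenerates, so that the orbits collapsing onto $\Gamma$ must be shown to form a Lebesgue-null, hence negligible, set.
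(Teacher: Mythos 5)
This theorem is not proved in the paper at all: it is quoted from [APPV09], and the only indication given is the one-line remark preceding it that a measure on $\Phi(V)$ can be defined by pushing $\mu_X$ forward through the homeomorphism $\Phi$. So there is no internal proof to compare against; what you have written is an outline of the argument of the cited reference, and it starts from exactly the construction the paper gestures at, namely $\mu=\Phi_*\mu_X$ with invariance and ergodicity inherited through the semiconjugacy $X^t\circ\Phi=\Phi\circ X_f^t$, hyperbolicity read off from the uniform estimates on $Df$ in Proposition 3.2, physicality from absolute continuity of $\mu_h$ plus absolute continuity of the stable foliation, and uniqueness from uniqueness of the absolutely continuous invariant measure of the one-dimensional quotient map $h$. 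As a roadmap this is faithful to [APPV09].

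There is, however, one genuine gap you pass over silently: the integrability of the return time, $\mu_f(\tau)<+\infty$. Because orbits entering $\Sigma_\sigma$ near $\Gamma_\sigma=W^s_{loc}(\sigma)\cap\Sigma_\sigma$ linger near the singularity, $\tau$ blows up like $-\log \mathrm{d}(x,\Gamma)$ and is unbounded; without $\int\tau\,d\mu_f<\infty$ the suspension measure $\mu_X$ is not even a finite measure, so it cannot be normalized to a probability, and your exponent comparison (dividing the exponents of $Df$ by ``the finite average $\mu_f(\tau)$'') has no content. Establishing this integrability requires knowing that $\mu_f$ has absolutely continuous conditionals transverse to $\Gamma$, so that the logarithmic singularity is integrable; this is a substantive step in [APPV09], not a formality. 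Relatedly, your physicality argument leans on absolute continuity of the stable holonomies inside $\Sigma$ and on the claim that Lebesgue-almost every point of $U$ hits $\Sigma$; you correctly flag the first as delicate, but the second also needs the precise construction of $U$ and $\Sigma$ (every orbit in $U$ off the stable set of the singularities must cross $\Sigma$ in bounded time), which neither you nor the paper supplies.
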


Finally, we extend the results of the last section with respect to the semi-flow $X^t_f$ to the original singular hyperbolic flow $X^t$.

Assume that $A^t:\Phi(V)\rightarrow SL(d,\mathbb C)$ is an $\eta$-H\"older continuous linear cocycle over $X^t|_{\Phi(V)}$. With respect to $A^t$ there is an $\eta$-H\"older linear cocycle over the semi-flow $X_f^t$ defined by
\begin{eqnarray}
A_\Phi^t(x)=A^t(\Phi(x)),~x\in V.
\end{eqnarray}
By definition, if $F^t_A$ is fiber bunched then the linear cocycle defined by $A_\Phi^t$ is fiber bunched.
As $\Phi$ is a homeomorphism onto its image $\Phi(V)$, we have immediately the following.
 
\begin{lem}
Lyapunov exponents of $A_\Phi^t$ have multiplicity 1 if and only if Lyapunov exponents of $A^t$ have multiplicity 1.
\end{lem}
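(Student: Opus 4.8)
The plan is to exploit that $\Phi$ is a topological conjugacy, $X^t\circ\Phi=\Phi\circ X_f^t$ by (5.2), that it carries $\mu_X$ to the physical measure $\mu$, and that it does not act on the fibres at all, so that $A_\Phi^t$ is literally $A^t$ ``read through $\Phi$''. First I would check that $A_\Phi^t$ is a bona fide linear cocycle over $X_f^t$ to which Oseledets' theorem applies: iterating the defining relation $A_\Phi^t(x)=A^t(\Phi(x))$ and using (5.2) gives
\[
A_\Phi^{s+t}(x)=A^{s+t}(\Phi(x))=A^t\!\big(X^s(\Phi(x))\big)\,A^s(\Phi(x))=A_\Phi^t\!\big(X_f^s(x)\big)\,A_\Phi^s(x),
\]
so $F^t_{A_\Phi}$ is a linear cocycle; and since $\Phi_*\mu_X=\mu$ and $A_\Phi^t(x)=A^t(\Phi(x))$, the integrability hypothesis is inherited, $\int\max\{0,\log\|A_\Phi^t\|\}\,d\mu_X=\int\max\{0,\log\|A^t\|\}\,d\mu<+\infty$.

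Next I would match the Oseledets data. Let $\Omega\subset\Phi(V)$ be the (full $\mu$-measure) set of Oseledets-regular points of $A^t$. Because $\Phi$ is a homeomorphism onto $\Phi(V)$ and $\Phi_*\mu_X=\mu$, the set $\Phi^{-1}(\Omega)$ has full $\mu_X$-measure; note that only the values of $\Phi$, not its derivative, enter the definition of $A_\Phi^t$, so the locus $\pi(\Gamma)$ where $\Phi$ fails to be a diffeomorphism plays no role here. For $x\in\Phi^{-1}(\Omega)$ and any $v\in\mathbb C^d\setminus\{0\}$,
\[
\lim_{|t|\to+\infty}\frac1t\log\|A_\Phi^t(x)\,v\|=\lim_{|t|\to+\infty}\frac1t\log\|A^t(\Phi(x))\,v\|,
\]
and the right-hand side converges since $\Phi(x)\in\Omega$. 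Hence $x$ is Oseledets-regular for $A_\Phi^t$, with the same Lyapunov exponents $\lambda_1(\Phi(x))>\dots>\lambda_k(\Phi(x))$ and the same Oseledets splitting of $\mathbb C^d$, namely $E^i_{A_\Phi}(x)=E^i_{A}(\Phi(x))$ for every $i$. In particular $\dim E^i_{A_\Phi}(x)=\dim E^i_{A}(\Phi(x))$, so the two cocycles have identical Lyapunov spectra, multiplicities included, at corresponding points.

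Finally, $\mu_X$ and $\mu$ are ergodic (Section 4 and Theorem 5.1), so each cocycle has a spectrum that is constant almost everywhere; by the previous paragraph these two constant spectra coincide. Therefore all Lyapunov exponents of $A_\Phi^t$ have multiplicity $1$ if and only if all Lyapunov exponents of $A^t$ do, which is the claimed equivalence.

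The only genuinely non-formal ingredient is the identity $\Phi_*\mu_X=\mu$ together with $\mu(\Phi(V))=1$, i.e. that the measure built on $\Phi(V)$ from $\mu_X$ in the paragraph preceding Theorem 5.1 is exactly the physical measure $\mu$ and is supported inside $\Phi(V)$; once that is granted, the rest is purely a transport of the Oseledets construction along the conjugacy $\Phi$, which is transparent precisely because $\Phi$ leaves the fibres untouched.
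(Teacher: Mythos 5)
Your proof is correct and follows the same route the paper intends: the paper offers no written argument beyond the remark that $\Phi$ is a homeomorphism onto its image conjugating the two flows, and your writeup is exactly the careful expansion of that one-line justification (cocycle property via $X^t\circ\Phi=\Phi\circ X_f^t$, transport of the Oseledets data along $\Phi$, and ergodicity to make the spectra global). Nothing is missing; you have simply supplied the details the paper declares ``immediate.''
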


\begin{prop}
Assume that $F^t_A$ is a $C^{r,\rho}$ fiber bunched linear cocycle over flow $X^t$. Then the map
\begin{eqnarray}
A^t\mapsto A^t_\Phi\in\mathcal V\subset\mathcal G^{r,\rho}(V,d,\mathbb C)
\end{eqnarray}
is a submersion.
\end{prop}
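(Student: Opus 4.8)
The plan is to run the same argument as for Proposition 4.3, with the explicit conjugacy $\Phi$ playing the role that the suspension construction played there. Since $\Phi$ intertwines the semi-flow $X_f^t$ with $X^t|_{\Phi(V)}$ by (5.2), the pull-back along $\Phi$ of a linear cocycle over $X^t|_{\Phi(V)}$ is again a linear cocycle over $X_f^t$, and this pull-back is precisely the assignment in (5.4) defined by (5.3). First I would record that, exactly as in (4.6), the derivative of the map in (5.4) at a cocycle $A^t$ sends a tangent cocycle $B^t$ over $X^t|_{\Phi(V)}$ to its pull-back $B_\Phi^t$, where $B_\Phi^t(x)=B^t(\Phi(x))$. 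Hence showing that (5.4) is a submersion amounts to showing that the bounded linear pull-back operator $P_\Phi\colon B^t\mapsto B_\Phi^t$ is onto $\mathcal G^{r,\rho}(V,d,\mathbb C)$ and admits a bounded right inverse, so that its kernel is complemented.

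To produce that right inverse I would simply transport cocycles back through $\Phi$. Given $B\in\mathcal G^{r,\rho}(V,d,\mathbb C)$, i.e.\ an $\eta$-H\"older linear cocycle over $X_f^t$, set $(Q_\Phi B)(y):=B(\Phi^{-1}(y))$ for $y\in\Phi(V)$. This is well defined because $\Phi$ is a homeomorphism onto $\Phi(V)$ and, as noted after (5.2), a diffeomorphism on the full Lebesgue measure set $V\setminus\pi(\Gamma)$; on that set $\Phi^{-1}$ is of class $C^{r,\rho}$, so $Q_\Phi B\in\mathcal G^{r,\rho}(\Phi(V),d,\mathbb C)$, and its cocycle identity over $X^t$ follows from (5.2). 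By construction $P_\Phi Q_\Phi=\mathrm{id}$, which yields surjectivity together with a bounded splitting, so (5.4) is a submersion. Finally, since $\|(Q_\Phi B)(y)\|=\|B(\Phi^{-1}(y))\|$, $\|(Q_\Phi B)(y)^{-1}\|=\|B(\Phi^{-1}(y))^{-1}\|$, and the bunching rate $\theta$ is carried along by the flow-conjugacy $\Phi$, fiber bunching of $B$ passes to $Q_\Phi B$; thus the submersion restricts to the fiber-bunched open sets, giving the stated form of (5.4) into $\mathcal V$.

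The hard part will be the regularity of $\Phi$ and $\Phi^{-1}$ near the collapsed locus $\pi(\Gamma)$. The Poincar\'e return time $\tau$ blows up as the base point tends to $\Gamma$ --- orbits passing close to the local stable manifold of a singularity take arbitrarily long to return --- so the derivatives of $\Phi(x,s)=X^s(x)$ are not uniformly bounded up to $\Gamma$, and one must check that $Q_\Phi B$ genuinely has finite $\|\cdot\|_{r,\rho}$ norm. I would handle this exactly as elsewhere in the paper: the set $\pi(\Gamma)$, and its image under $\Phi$ (contained in the union of the local stable manifolds of the singularities), has zero Lebesgue measure and hence zero $\mu_X$-measure, and off this locus $\Phi$ is a bona fide $C^{r,\rho}$ diffeomorphism with locally bounded derivatives, by the $C^2$ regularity of the flow $X^t$ together with the $C^{1+\rho}$ regularity of the stable foliation from [PT93]. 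Since everything we need --- the Oseledets data, and therefore simplicity of the Lyapunov spectrum --- depends only on full $\mu_X$-measure sets, $P_\Phi$ and $Q_\Phi$ are mutually inverse in the relevant sense, and the submersion property holds in the form needed to push Corollary 4.4 from the semi-flow $X_f^t$ to the singular hyperbolic flow $X^t$.
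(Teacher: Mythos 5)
Your argument is essentially the paper's own: you identify the derivative of $A^t\mapsto A^t_\Phi$ with the pull-back operator $B^t\mapsto B^t_\Phi$ and exhibit surjectivity by transporting a given $B^t$ back through $\Phi^{-1}$, i.e.\ setting $\mathcal B^t(x)=B^t(\Phi^{-1}(x))$ on $\Phi(V)$, which is exactly the preimage the paper constructs. Your additional discussion of the bounded right inverse and of the regularity of $\Phi^{-1}$ near the collapsed locus $\pi(\Gamma)$ (where the return time blows up) goes beyond what the paper records --- the paper simply asserts the conclusion --- and correctly flags the one point that genuinely needs care in making the $C^{r,\rho}$ claim rigorous.
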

\begin{proof}
By definition, for all tangent vector$B^t$ in the tangent space of any $A^t\in\mathcal G^\eta(U,d,\mathbb C)$, we have
\begin{eqnarray}
(\partial_{A^t})(A_\Phi^t)(B^t)=B_\Phi^t.
\end{eqnarray}
To prove that (5.5) is surjective, suppose that $B^t\in\mathcal G^{r,rho}(V,d,\mathbb C)$. 
Then, we define a tangent vector $\mathcal B^t\in\mathcal G^{r,\rho}(U,d,\mathbb C)$ as
$$\mathcal B^t(x)=B^t(\Phi^{-1}(x)),~x\in\Phi(V).$$
Now, it is easy to see that $\mathcal B_\Phi^t(x)=B^t(x)$, for any $x\in V$.
\end{proof}
As $\Phi(V)$ do have full lebesgue measure in $U$, the proof of Main Theorem is now completed.\\

We conclude this work by recall the fact that Theorem 3.13 is valid for cocycles with values in $\mathrm{GL}(d,\mathbb R)$ 
(see [BV04] and [V08] for more details).
Hence, the Main Theorem and corollaries 1 and 2 are valid for real-valued cocycles over singular hyperbolic attractors.

\end{document}